\documentclass{article}
\usepackage{amsfonts}
\usepackage{amsmath}
\usepackage{amsthm}   
\usepackage{amssymb}
\usepackage{graphicx}
\usepackage{float}
\usepackage{booktabs}
\usepackage{fancybox}
\usepackage{fancyhdr}
\usepackage{tabularx}
\usepackage[a4paper,top=2cm,bottom=2cm]{geometry}
\usepackage{authblk}
\usepackage[numbers]{natbib}
\newtheorem{theorem}{Theorem}[section]

\newtheorem{proposition}[theorem]{Proposition}
\newtheorem{corollary}[theorem]{Corollary}
\newtheorem{assumption}[theorem]{Assumption}

\theoremstyle{definition}

\theoremstyle{remark}
\newtheorem{remark}[theorem]{Remark}
\usepackage{listings}
\usepackage[colorlinks=true, linkcolor=black, citecolor=blue, urlcolor=blue]{hyperref}
\title{Tikhonov-Regularized Physics-Informed Neural Networks for Terminal-State Distributed Optimal Control of Parabolic Partial Differential Equations}

\author[1]{Nguyen Thanh Quang\thanks{quang.nt227031@sis.hust.edu.vn}}
\author[1]{Ta Thi Thanh Mai\thanks{mai.tathithanh@hust.edu.vn}}
\affil[1]{Faculty of Mathematics and Informatics, Hanoi University of Science and Technology, Ha Noi, Viet Nam}

\date{\today}
\begin{document}
    \maketitle
    \begin{abstract}
        In this study, we propose a Physics-Informed Neural Networks 
        (PINNs) framework that incorporates Tikhonov regularization to solve 
        terminal-state tracking optimal control constrained by parabolic 
        partial differential equations (PDEs).
        This problem  is inherently ill-posed, as infinitely many distributed
        controls may drive the system to the same desired state, so the regularization 
        guides the optimizer toward the minimum-energy control, understood here as the control with the smallest space-time $H^1$ norm, restoring numerical 
        stability and yielding a smooth, physically meaningful solution. 
        On the theoretical side, we establish 
        a consistency result showing that PINNs minimizers nearly attain 
        the continuous regularized objective under residual and quadrature 
        approximation assumptions, and a novel error estimate that 
        bounds the deviation of the learned control from the 
        minimum-energy solution in terms of the PINNs training residuals 
        and the regularization parameter. Numerical experiments on the 
        linear heat equation and the nonlinear Burgers' equation demonstrate 
        that the regularized PINNs framework accurately achieves the target terminal state 
        while producing controls that have significantly lower energy and smoother profiles 
        than their unregularized counterparts.
    \end{abstract}
    \noindent\textbf{Keywords:} Optimal control; Physics-informed neural networks; Tikhonov regularization; Ill-posed problems; Terminal-state tracking.
    \section{Introduction}

Optimal control of systems governed by partial differential equations (PDEs) plays a central role in engineering and scientific computing, with applications in thermal regulation, fluid mechanics, environmental management, and biomedicine~\cite{badescu2017optimal, braack2009optimal, shastri2008optimal, swan1981optimal}.

A particularly challenging class is the \emph{terminal-state optimal control problem}. Its aim is to choose a control distributed over space and time so that the solution of the governing PDE is as close as possible to a prescribed target at the final time. The discrepancy between the resulting terminal state and the target state defines the objective to be minimized, while the PDE, boundary conditions, and initial condition act as constraints~\cite{logist2004optimal,hou2007analysis}. 

A fundamental difficulty is the inherent \emph{ill-posedness} of this problem~\cite{serovaiskii2011counterexamples, d2023qualitative}: due to the smoothing properties of parabolic PDEs~\cite{troltzsch2010optimal}, infinitely many controls may produce the same terminal state, violating Hadamard's uniqueness condition.
Tikhonov regularization is a classical remedy~\cite{engl1996regularization, golub1999tikhonov}. When the target is attainable, it selects, among all controls achieving the target, the one with minimum space-time $H^1$ norm, thereby favoring smooth controls without spurious oscillations~\cite{marengo2002inverse, naidu2018optimal}. Crucially, this penalty is not part of the original terminal-tracking objective but is introduced at the solver level~\cite{casas2018influence}, preserving the control objective while restoring coercivity and numerical stability. Classical theory guarantees that, as the strength of the regularization is progressively reduced, the regularized solutions approach the minimum-energy control~\cite{engl1996regularization, laszlo2025proximal}.

Traditional mesh-based, adjoint-based methods for PDE-constrained control~\cite{de2015numerical} are mathematically rigorous but require specialized expertise and become costly in high dimensions or for nonlinear problems. Physics-Informed Neural Networks (PINNs)~\cite{raissi2019physics} offer a mesh-free alternative that embeds PDE residuals directly into the loss function, bypassing explicit mesh generation and adjoint derivations. PINNs have been applied to forward problems such as elasticity~\cite{hoang2026gradnorm}, inverse problems such as Navier-Stokes parameter identification~\cite{pham2025physics}, impedance imaging~\cite{duan2024current}, and parabolic inverse source problems~\cite{zhang2023stability}. Their application to optimal control has gained considerable attention~\cite{mowlavi2023optimal, lai2025hard, li2026aw, song2024admm, ji2025potential}. However, a systematic integration of Tikhonov regularization within the PINNs loss for terminal-state control, together with principled parameter selection, remains underexplored.

In this paper, we propose a Tikhonov-regularized PINNs framework for terminal-state tracking distributed optimal control of parabolic PDEs. Our contributions are: (i) a PINNs loss incorporating a space-time $H^1$ Tikhonov penalty that guides the optimizer toward minimum-energy solutions without modifying the control objective; (ii) a consistency estimate (Proposition~\ref{prop:consistency}) and, under a standard source condition, a novel error bound (Theorem~\ref{thm:control_error}) that relates the accuracy of the learned control to the training accuracy and regularization strength; and (iii) numerical validation on the heat and Burgers' equations using the L-curve criterion~\cite{hansen1992analysis,calvetti2000tikhonov} for parameter selection.

The remainder of this paper is structured as follows. 
Section~2 presents the precise mathematical formulation of the terminal-state optimal control problem, including the governing parabolic PDE, the terminal-cost functional, and a brief discussion on the inherent ill-posedness caused by the non-uniqueness of distributed controls. 
Section~3 describes our Tikhonov-regularized Physics-Informed Neural Networks (PINNs) framework, in which both the state and the control are approximated by independent neural networks, and the loss function incorporates not only the PDE residuals but also a space-time $H^1$ penalty on the control to enforce smoothness and select the minimum-energy solution. 
Section~4 provides the theoretical analysis of the proposed method: we first prove a consistency result for the regularized objective when the residuals are small, and then derive a novel error estimate for the learned control in the space-time $H^1$ norm in terms of the training residual and the regularization parameter; the corresponding space-time $L^2$ estimate follows by continuous embedding. 
Section~5 reports numerical experiments on two benchmark problems-the linear heat equation and the nonlinear Burgers' equation-to illustrate the effectiveness of the regularized PINNs; we compare its performance with that of the corresponding unregularized formulation and also demonstrate the use of the L-curve criterion for selecting the regularization parameter. 
Finally, Section~6 concludes the paper with a summary of our main contributions and a discussion of several promising directions for future work, such as extending the framework to boundary control, adaptive loss-weighting, and rigorous error analysis for fully nonlinear problems.

    \section{Problem Formulation}

\subsection{Governing Equations}

Let $\Omega \subset \mathbb{R}^d$ be a bounded domain with smooth boundary $\partial\Omega$ and $T > 0$ a fixed final time. We write $\Omega_T:=\Omega\times(0,T)$ for the space-time cylinder and $\Sigma_T:=\partial\Omega\times(0,T)$ for its lateral boundary. For any domain $D$, $L^2(D)$ denotes the space of square-integrable functions on $D$, while $H^k(D)$, for $k=1,2$, denotes the Sobolev space of functions with square-integrable weak derivatives up to order $k$. Their norms are denoted by $\|\cdot\|_{L^2(D)}$ and $\|\cdot\|_{H^k(D)}$, respectively. We use $\nabla_{x,t}$ for the gradient with respect to all space-time variables and $\Delta$ for the spatial Laplacian. The state $u(x,t)$ is governed by:
\begin{equation}\label{eq:state_pde}
\begin{cases}
\partial_t u(x,t) + \mathcal{F}[u(x,t)] = f(x,t), & (x,t) \in \Omega_T, \\
\mathcal{B}[u(x,t)] = 0, & (x,t) \in \Sigma_T, \\
u(x,0) = u_0(x), & x \in \Omega,
\end{cases}
\end{equation}
where $\mathcal{F}$ is a (possibly nonlinear) spatial differential operator, $\mathcal{B}$ encodes homogeneous boundary conditions, and $u_0\in L^2(\Omega)$ is the initial state. Let $V\subset H^1(\Omega)$ be the energy space incorporating the boundary condition encoded by $\mathcal{B}$, and assume that $V$ is densely and continuously embedded in $L^2(\Omega)$; for example, $V=H_0^1(\Omega)$ for homogeneous Dirichlet conditions, where $H_0^1(\Omega)$ consists of $H^1(\Omega)$ functions with zero boundary trace. Denote the dual of $V$ by $V^*$ and define:
\[
W(0,T;V):=\bigl\{v\in L^2(0,T;V):\partial_t v\in L^2(0,T;V^*)\bigr\}.
\]
For every control $f\in H^1(\Omega_T)$, we assume that \eqref{eq:state_pde} admits a unique weak solution $u_f\in W(0,T;V)$. Since
\[
W(0,T;V)\hookrightarrow C([0,T];L^2(\Omega)),
\]
the terminal value $u_f(\cdot,T)$ is well defined. The map $f\mapsto u_f$ is the \textit{control-to-state map}, and we use the equivalent notations $u_f$ and $u(f)$ for this state.

\subsection{Terminal-State Optimal Control Problem}

The goal is to find a distributed control $f^*$ minimizing the terminal-cost functional
\begin{equation}\label{eq:cost}
J(f) = \frac{1}{2}\int_\Omega \bigl|u_f(x,T) - u_d(x)\bigr|^2\,dx
\end{equation}
subject to \eqref{eq:state_pde}, where $u_d \in L^2(\Omega)$ is the desired target state at the terminal time $T$:
\begin{equation}\label{eq:ocp}
f^* \in \arg\min_{f\in H^1(\Omega_T)} J(f).
\end{equation}

\subsection{Ill-Posedness and the Minimum-Energy Criterion}

Problem \eqref{eq:ocp} can be ill-posed in the sense of Hadamard. For linear parabolic PDEs, the control-to-terminal-state map is compact and, for distributed controls on $\Omega_T$, generally non-injective; controls that differ by an element of its kernel yield the same terminal state. For nonlinear PDEs, a linear null space is not defined; instead, non-convexity of the terminal-cost functional may produce multiple solution branches and unstable control recovery.

Consequently, when several controls attain the same minimum value of $J$, the terminal-tracking objective alone cannot distinguish among them. We therefore introduce an additional selection criterion: among the minimizers of $J$, we choose a control with the smallest $H^1(\Omega_T)$ norm and call it the \emph{minimum-energy control}. If the target is attainable, this is equivalently the smallest-$H^1(\Omega_T)$ control that reaches the target exactly. The criterion favors smooth controls and suppresses spurious oscillations, but it is not part of the original objective $J$; instead, it is implemented through the Tikhonov penalty introduced in Section~3. For the linear problem, standard inverse-problem theory shows that, under the usual regularity assumptions, the regularized minimizers approach this minimum-energy control as the penalty strength tends to zero~\cite{engl1996regularization}.

    \section{Tikhonov-Regularized PINNs Framework}

We parameterize both the state $u(x,t)$ and the distributed control $f(x,t)$ as independent fully connected feedforward neural networks:
\begin{align}
u(x,t) \approx u_{\theta}(x,t), \qquad f(x,t) \approx f_{\eta}(x,t),
\end{align}
where $\theta\in\mathbb{R}^{p_u}$ and $\eta\in\mathbb{R}^{p_f}$ collect the weights and biases of the state and control networks, respectively; $p_u$ and $p_f$ are the corresponding numbers of trainable parameters. Both networks take space-time coordinates $(x,t)\in\mathbb{R}^{d+1}$ as input and use the $\tanh$ activation function, which is smooth and supports higher-order derivative computation via automatic differentiation. The network $u_\theta$ has $L_u$ hidden layers with $N_u$ neurons per layer; $f_\eta$ has $L_f$ hidden layers with $N_f$ neurons per layer. Both are trained jointly by minimizing a single shared loss.

\subsection{Loss Function with Tikhonov Regularization}

We use the following sample sets:
\begin{align*}
\{(x_i^{\mathrm{int}},t_i^{\mathrm{int}})\}_{i=1}^{N_{\mathrm{int}}}&\subset\Omega_T,
&\{(x_i^{\mathrm{sb}},t_i^{\mathrm{sb}})\}_{i=1}^{N_{\mathrm{sb}}}&\subset\Sigma_T,\\
\{x_i^{\mathrm{tb}}\}_{i=1}^{N_{\mathrm{tb}}}&\subset\Omega,
&\{x_i^{\mathrm{d}}\}_{i=1}^{N_d}&\subset\Omega,\\
\{(x_i^{\mathrm{reg}},t_i^{\mathrm{reg}})\}_{i=1}^{N_{\mathrm{reg}}}&\subset\Omega_T.
\end{align*}
They contain the interior, spatial-boundary, initial-condition, desired-terminal-state, and regularization sample points, respectively. The subscripts ``int,'' ``sb,'' ``tb,'' ``d,'' and ``reg'' refer to these five sample sets. We define the interior, spatial-boundary, and initial-condition residual functions by
\begin{align}
\mathcal{R}_{\mathrm{int}}(x,t;\theta,\eta)
&:= \partial_t u_\theta(x,t)+\mathcal{F}[u_\theta](x,t)-f_\eta(x,t),
&& (x,t)\in\Omega_T, \label{eq:res_int}\\
\mathcal{R}_{\mathrm{sb}}(x,t;\theta)
&:= \mathcal{B}[u_\theta](x,t),
&& (x,t)\in\Sigma_T, \label{eq:res_sb}\\
\mathcal{R}_{\mathrm{tb}}(x;\theta)
&:= u_\theta(x,0)-u_0(x),
&& x\in\Omega. \label{eq:res_tb}
\end{align}
The total loss is:
\begin{equation}\label{eq:loss}
\mathcal{L}(\theta,\eta) = \mathcal{L}_{\text{int}} + \mathcal{L}_{\text{sb}} + \mathcal{L}_{\text{tb}} + w_J\,\mathcal{L}_{\text{target}} + \alpha\,\mathcal{L}_{\text{reg}},
\end{equation}
where $w_J>0$ weights the control objective and $\alpha>0$ is the regularization parameter. The individual terms are:
\begin{align}
\mathcal{L}_{\text{int}} &= \frac{1}{N_{\text{int}}}\sum_{i=1}^{N_{\text{int}}} \bigl|\mathcal{R}_{\mathrm{int}}(x_i^{\text{int}},t_i^{\text{int}};\theta,\eta)\bigr|^2, \label{eq:loss_int}\\
\mathcal{L}_{\text{sb}} &= \frac{1}{N_{\text{sb}}}\sum_{i=1}^{N_{\text{sb}}} \bigl|\mathcal{R}_{\mathrm{sb}}(x_i^{\text{sb}},t_i^{\text{sb}};\theta)\bigr|^2, \label{eq:loss_sb}\\
\mathcal{L}_{\text{tb}} &= \frac{1}{N_{\text{tb}}}\sum_{i=1}^{N_{\text{tb}}} \bigl|\mathcal{R}_{\mathrm{tb}}(x_i^{\text{tb}};\theta)\bigr|^2, \label{eq:loss_tb}\\
\mathcal{L}_{\text{target}} &= \frac{1}{2N_d}\sum_{i=1}^{N_d} \bigl|u_\theta(x_i^{\mathrm d},T) - u_d(x_i^{\mathrm d})\bigr|^2, \label{eq:loss_target}\\
\mathcal{L}_{\text{reg}} &= \frac{1}{2N_{\text{reg}}}\sum_{i=1}^{N_{\text{reg}}} \Bigl[|f_\eta|^2 + |\nabla_{x,t}f_\eta|^2\Bigr]_{(x_i^{\text{reg}},t_i^{\text{reg}})}. \label{eq:loss_reg}
\end{align}
All derivatives are computed via automatic differentiation. When $\alpha=0$, \eqref{eq:loss} reduces to the standard unregularized PINNs formulation, which is ill-posed and prone to non-unique or oscillatory solutions. The term $\mathcal{L}_{\text{reg}}$ with $\alpha>0$ restores coercivity and stabilizes training.

Thus, \(\mathcal{L}_{\mathrm{int}}\), \(\mathcal{L}_{\mathrm{sb}}\), and \(\mathcal{L}_{\mathrm{tb}}\) are empirical quadrature approximations of the squared continuous norms \(\|\mathcal{R}_{\mathrm{int}}\|_{L^2(\Omega_T)}^2\), \(\|\mathcal{R}_{\mathrm{sb}}\|_{L^2(\Sigma_T)}^2\), and \(\|\mathcal{R}_{\mathrm{tb}}\|_{L^2(\Omega)}^2\), respectively. In the analysis, residuals written without explicit parameter arguments are evaluated at the trained parameters $(\theta^*,\eta^*)$. Whenever a continuous residual bound is used, we state it as a separate assumption; it does not follow from a small empirical loss without an additional quadrature or generalization estimate.

\subsection{Training Strategy and Hyperparameter Selection}

Training seeks parameters
\[
(\theta^*,\eta^*)\in\arg\min_{(\theta,\eta)\in\mathbb{R}^{p_u}\times\mathbb{R}^{p_f}}\mathcal{L}(\theta,\eta)
\]
with the Adam stochastic-gradient optimizer, using a learning rate decayed by a fixed factor at prescribed epoch milestones. Collocation points are resampled periodically to improve domain coverage.

The weight $w_J$ is selected by a line search over positive values on a logarithmic scale. For each candidate value, the networks are trained and the learned control is inserted into the governing PDE to compute the rollout state. We then evaluate the rollout terminal objective
\[
J_{\mathrm{roll}}(w_J):=\frac12\bigl\|u(f_{\eta^*(w_J)})(\cdot,T)-u_d\bigr\|_{L^2(\Omega)}^2,
\]
where $\eta^*(w_J)$ denotes the trained control-network parameters for that candidate. The value of $w_J$ giving the smallest rollout objective is selected; using the rollout rather than $u_{\theta^*}$ alone also checks that terminal accuracy is consistent with the governing PDE. The regularization parameter $\alpha$ is chosen separately via the L-curve criterion~\cite{hansen1992analysis,calvetti2000tikhonov}: we train for a range of $\alpha$ values and identify the point of maximum curvature on the log-log plot of $\mathcal{L}_{\text{target}}$ versus $\mathcal{L}_{\text{reg}}$, corresponding to the optimal trade-off between data fidelity and control regularity.

    \section{Theoretical Analysis: Ill-Posedness, Regularization, and Error Estimates}
\label{sec:theory}

In this section, we provide a rigorous mathematical discussion of the terminal-state optimal control problem.
We first recall the inherent ill-posedness due to non-uniqueness (Section~\ref{subsec:illposed}).
We then analyze the stabilizing effect of the Tikhonov regularization on the control network (Section~\ref{subsec:stabilizing}) and establish a quantitative connection to the minimum-energy control (Section~\ref{subsec:connection}).
We next present a consistency result (Proposition~\ref{prop:consistency}) showing that the PINNs minimizer nearly attains the optimal regularized objective.
Finally, we prove a new error estimate (Theorem~\ref{thm:control_error}) that directly bounds the deviation of the learned control from the minimum-energy solution, using only the residual losses of the PINNs training.

\subsection{Ill-Posedness of the Unregularized Problem}
\label{subsec:illposed}

Consider the control-to-terminal-state map $\mathcal{S}: f \mapsto u_f(\cdot,T)$, which assigns to each distributed control $f \in H^1(\Omega_T)$ the terminal state of the PDE constraint~\eqref{eq:state_pde}.
For linear parabolic equations (e.g., the heat equation), $\mathcal{S}:H^1(\Omega_T)\to L^2(\Omega)$ is a compact linear operator \cite{vrabie2002compactness}.
Compactness alone does not imply non-injectivity. For the distributed-in-time control setting considered here, however, each terminal-state mode depends on a time integral of the corresponding control mode, leaving an infinite-dimensional kernel $\mathcal{N}(\mathcal{S})$; hence there are infinitely many non-zero controls $f_{\rm null}$ with $\mathcal{S}(f_{\rm null})=0$.
If $f^*$ is any control achieving the desired terminal state $u_d$, then the entire affine set:
\[
f^* + \mathcal{N}(\mathcal{S}) = \{f^* + f_{\rm null} \mid f_{\rm null}\in\mathcal{N}(\mathcal{S})\}
\]
also yields the same terminal state and therefore attains the same cost value $J(f^*)$.
Hence the unregularized problem~\eqref{eq:ocp} admits infinitely many global minimizers, violating Hadamard's uniqueness condition.

For nonlinear PDEs such as the Burgers' equation, the control-to-terminal-state map is nonlinear, so the affine null-space argument above no longer applies. Instead, composing this nonlinear map with the terminal-tracking objective can make $J(f)$ non-convex, potentially producing multiple local minimizers or distinct solution branches~\cite{d2023qualitative}.
Gradient-based optimization applied to such problems often exhibits instability and converges to controls contaminated by high-frequency oscillations, reflecting the underlying non-uniqueness and the lack of coercivity.

\subsection{Stabilizing Effect of the Tikhonov Regularization}
\label{subsec:stabilizing}

Although the original optimal control problem~\eqref{eq:ocp} does not contain a regularization term, the PINNs loss function~\eqref{eq:loss} incorporates an explicit $H^1(\Omega_T)$ Tikhonov penalty $\alpha \mathcal{L}_{\rm reg}$ on the control network $f_\eta$.
This modification fundamentally alters the optimization landscape and is essential for obtaining physically meaningful solutions.

When $\mathcal{F}$ is a linear elliptic operator (e.g., $\mathcal{F}[u] = -\Delta u$), the reduced terminal-cost functional is convex in the control function $f$.
Adding the squared $H^1(\Omega_T)$ penalty makes the continuous reduced functional strongly convex and coercive on $H^1(\Omega_T)$; hence, for every fixed $\alpha>0$, it admits a unique minimizing control $f_\alpha$.
This uniqueness holds for the optimal control function, not necessarily for the neural-network parameters used to represent it. PINNs training optimizes the weights $(\theta,\eta)$, and the network outputs depend nonlinearly on these weights. Moreover, neuron permutations and other parameter symmetries allow different parameter pairs to represent the same state and control functions. Consequently, even when the continuous regularized problem has a unique optimal control $f_\alpha$, the PINNs loss $\mathcal{L}(\theta,\eta)$ is generally non-convex and need not have a unique parameter minimizer.
Under the assumptions stated in Section~\ref{subsec:connection}, the continuous minimizers converge strongly in $H^1(\Omega_T)$ to the minimum-energy control as $\alpha\to0^+$.

For nonlinear PDEs, the reduced terminal-cost functional is generally non-convex.
At the function-space level, the $H^1(\Omega_T)$ penalty is coercive and weakly lower semicontinuous with respect to the control function $f$ \cite{engl1996regularization}. Together with suitable weak continuity or closedness assumptions on the nonlinear control-to-terminal-state map, these properties yield existence of a minimizing control, but not uniqueness.
They do not by themselves guarantee existence or uniqueness in the raw neural-network parameter space. In computation, the penalty nevertheless suppresses highly oscillatory control functions and favors smoother local minima.

\subsection{Connection to the Minimum-Energy Control}
\label{subsec:connection}

Assume that the target state $u_d$ is attainable, i.e., there exists at least one admissible control $f^\dagger\in H^1(\Omega_T)$ such that $u_{f^\dagger}(\cdot,T) = u_d$.
The \emph{minimum-energy control} $f_{\min}$ is defined by:
\begin{equation}\label{eq:minenergy}
  \begin{aligned}
    &f_{\min} = \arg\min_{f\in H^1(\Omega_T)} \|f\|_{H^1(\Omega_T)} \\
    &\quad \text{subject to} \quad u_f(\cdot,T) = u_d.
  \end{aligned}
\end{equation}
For a linear control-to-terminal-state map, the feasible set is closed and affine, so strict convexity of the squared $H^1(\Omega_T)$ norm makes this control unique. For nonlinear problems, uniqueness is an additional assumption and $f_{\min}$ denotes a selected minimum-energy solution.
In the linear case, classical regularization theory \cite{engl1996regularization} establishes that the minimizers $f_\alpha$ of the regularized problem:
\[
\min_{f\in H^1(\Omega_T)} \left( \frac12\|u_f(\cdot,T)-u_d\|_{L^2(\Omega)}^2 + \frac{\alpha}{2}\|f\|_{H^1(\Omega_T)}^2 \right)
\]
converge strongly in $H^1(\Omega_T)$ to $f_{\min}$ as $\alpha\to0^+$. Analogous convergence for a nonlinear problem requires additional compactness, stability, and uniqueness assumptions.
In our PINNs framework, a small but fixed $\alpha>0$ is employed during training.
Consequently, the computed control $f_{\eta^*}$ serves as a stable approximation of the minimum-energy solution.
This connection justifies the Tikhonov penalty not merely as a numerical trick, but as a principled mechanism to select the physically most favorable control among infinitely many mathematically admissible ones.

\subsection{Consistency of the PINNs Minimizer}
\label{subsec:consistency}

The following proposition compares the complete regularized objective, rather than its terminal component alone. This distinction is necessary because minimizing a sum of terminal and regularization terms does not imply that either term is minimized separately.

\begin{proposition}[Consistency of the PINNs minimizer]\label{prop:consistency}
Fix $\alpha_0\ge0$, and let
\[
f_{\alpha_0}\in\arg\min_{f\in H^1(\Omega_T)}
\left[
\frac{w_J}{2}\|u(f)(\cdot,T)-u_d\|_{L^2(\Omega)}^2
+\frac{\alpha_0}{2}\|f\|_{H^1(\Omega_T)}^2
\right]
\]
be a minimizer of the continuous weighted regularized problem. Define its optimal value by
\[
\mathcal{J}_{\alpha_0}^*
=\frac{w_J}{2}\|u(f_{\alpha_0})(\cdot,T)-u_d\|_{L^2(\Omega)}^2
+\frac{\alpha_0}{2}\|f_{\alpha_0}\|_{H^1(\Omega_T)}^2.
\]
Let $(u_{\theta^*},f_{\eta^*})$ be a global minimizer of \eqref{eq:loss} with $\alpha=\alpha_0$. Suppose there is a comparison pair $(u_{\tilde\theta},f_{\tilde\eta})$ approximating $(u(f_{\alpha_0}),f_{\alpha_0})$ such that its three constraint losses sum to at most $\epsilon$ and its sampled target and regularization terms approximate their continuous counterparts with total weighted error at most $C_{\mathrm{quad}}\sqrt{\epsilon}$. Here $C_{\mathrm{quad}}>0$ is a quadrature-accuracy constant independent of $\epsilon$. Then, for $0<\epsilon\le1$,
\[
w_J\mathcal{L}_{\rm target}(\theta^*)
+\alpha_0\mathcal{L}_{\rm reg}(\eta^*)
\le \mathcal{J}_{\alpha_0}^*+C'\sqrt{\epsilon},
\]
where $C'=C_{\mathrm{quad}}+1$ is independent of $\epsilon$.
\end{proposition}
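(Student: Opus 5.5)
The argument compares the global minimizer against the comparison pair $(u_{\tilde\theta},f_{\tilde\eta})$, and then turns the sampled objective of that pair into the continuous optimum $\mathcal{J}_{\alpha_0}^*$.

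\textbf{Step 1 (drop the constraint losses at the minimizer).} Since $\mathcal{L}_{\rm int},\mathcal{L}_{\rm sb},\mathcal{L}_{\rm tb}\ge0$, we have
\[
w_J\mathcal{L}_{\rm target}(\theta^*)+\alpha_0\mathcal{L}_{\rm reg}(\eta^*)\le \mathcal{L}(\theta^*,\eta^*).
\]

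\textbf{Step 2 (global optimality).} Because $(\theta^*,\eta^*)$ is a global minimizer of \eqref{eq:loss} with $\alpha=\alpha_0$,
\[
\mathcal{L}(\theta^*,\eta^*)\le\mathcal{L}(\tilde\theta,\tilde\eta)
=\bigl[\mathcal{L}_{\rm int}+\mathcal{L}_{\rm sb}+\mathcal{L}_{\rm tb}\bigr](\tilde\theta,\tilde\eta)
+w_J\mathcal{L}_{\rm target}(\tilde\theta)+\alpha_0\mathcal{L}_{\rm reg}(\tilde\eta).
\]
By hypothesis, the bracketed constraint term is at most $\epsilon$.

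\textbf{Step 3 (quadrature hypothesis).} We read the quadrature assumption as
\[
\Bigl|w_J\mathcal{L}_{\rm target}(\tilde\theta)-\tfrac{w_J}{2}\|u(f_{\alpha_0})(\cdot,T)-u_d\|_{L^2(\Omega)}^2\Bigr|
+\alpha_0\Bigl|\mathcal{L}_{\rm reg}(\tilde\eta)-\tfrac12\|f_{\alpha_0}\|_{H^1(\Omega_T)}^2\Bigr|\le C_{\rm quad}\sqrt\epsilon .
\]
The continuous counterparts are taken at the exact pair $(u(f_{\alpha_0}),f_{\alpha_0})$ that $(u_{\tilde\theta},f_{\tilde\eta})$ approximates. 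This is the natural meaning of ``approximate their continuous counterparts'', and it absorbs both quadrature error and network approximation error. Adding the two continuous terms gives exactly $\mathcal{J}_{\alpha_0}^*$. Hence
\[
w_J\mathcal{L}_{\rm target}(\tilde\theta)+\alpha_0\mathcal{L}_{\rm reg}(\tilde\eta)\le\mathcal{J}_{\alpha_0}^*+C_{\rm quad}\sqrt\epsilon .
\]

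\textbf{Step 4 (combine).} Chaining Steps 1--3 yields
\[
w_J\mathcal{L}_{\rm target}(\theta^*)+\alpha_0\mathcal{L}_{\rm reg}(\eta^*)\le\mathcal{J}_{\alpha_0}^*+\epsilon+C_{\rm quad}\sqrt\epsilon .
\]
Since $0<\epsilon\le1$ implies $\epsilon\le\sqrt\epsilon$, the right-hand side is at most $\mathcal{J}_{\alpha_0}^*+(C_{\rm quad}+1)\sqrt\epsilon$. This is the claimed bound with $C'=C_{\rm quad}+1$, which does not depend on $\epsilon$.

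\textbf{Main obstacle.} The only delicate point is Step 3: fixing the precise meaning of the quadrature hypothesis. It must compare the sampled terms of the comparison pair with the continuous terms evaluated at the true optimum $(u(f_{\alpha_0}),f_{\alpha_0})$, not at $(u_{\tilde\theta},f_{\tilde\eta})$. If it only compared them with the continuous norms of the networks themselves, we would additionally need the network state $u_{\tilde\theta}$ to be close to $u(f_{\alpha_0})$ at time $T$. That would require a stability estimate bounding the terminal error by the residuals, which is not available here for general nonlinear $\mathcal{F}$. I would therefore state this reading explicitly, and note that a stability estimate would be needed under the weaker reading. Everything else is a two-line comparison argument.
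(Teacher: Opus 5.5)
Your proof is correct and follows the paper's argument: drop the non-negative constraint losses, invoke global minimality against the comparison pair, bound that pair's loss by $\mathcal{J}_{\alpha_0}^*+C_{\mathrm{quad}}\sqrt{\epsilon}+\epsilon$, and absorb $\epsilon$ using $\epsilon\le\sqrt{\epsilon}$. Your explicit reading of the quadrature hypothesis, in which the comparison pair's sampled terms are measured against the continuous terms at $(u(f_{\alpha_0}),f_{\alpha_0})$, is the reading the paper uses implicitly when it passes directly to $\mathcal{J}_{\alpha_0}^*$.
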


\begin{proof}
The three constraint-loss terms in the total loss are non-negative, and global minimality gives
\[
\begin{aligned}
w_J\mathcal{L}_{\rm target}(\theta^*)
+\alpha_0\mathcal{L}_{\rm reg}(\eta^*)
&\le \mathcal{L}(\theta^*,\eta^*)\\
&\le \mathcal{L}(\tilde\theta,\tilde\eta)\\
&\le \mathcal{J}_{\alpha_0}^*+C_{\mathrm{quad}}\sqrt{\epsilon}+\epsilon.
\end{aligned}
\]
Since $\epsilon\le\sqrt{\epsilon}$, the conclusion follows. In particular, the regularization term has been retained on both sides; dropping the non-negative comparison term $\alpha_0\mathcal{L}_{\rm reg}(\tilde\eta)$ would not be valid.
\end{proof}

\subsection{Error Estimate for the Learned Control}
\label{subsec:error}

While Proposition~\ref{prop:consistency} controls the regularized objective value, it does not quantify how close the computed control $f_{\eta^*}$ is to the minimum-energy solution $f_{\min}$.
We address this question first for linear parabolic equations, where a rigorous estimate can be derived using the strong convexity of the Tikhonov functional.
A heuristic extension to nonlinear problems is discussed afterwards.

\subsubsection{Tikhonov functional and strong convexity (linear case)}
Assume that the spatial operator $\mathcal{F}$ is linear (e.g.\ $\mathcal{F}[u] = -\Delta u$), so that the control-to-terminal-state map:
\[
\mathcal{S}: f \mapsto u_f(\cdot,T), \qquad H^1(\Omega_T) \to L^2(\Omega)
\]
is linear and compact~\cite{vrabie2002compactness}.
For a fixed regularization parameter $\alpha>0$, define the continuous Tikhonov functional:
\begin{equation}\label{eq:tikhonov}
\mathcal{J}_\alpha(f) = \frac12\|\mathcal{S}f - u_d\|_{L^2(\Omega)}^2 + \frac{\alpha}{2}\|f\|_{H^1(\Omega_T)}^2.
\end{equation}
The two norms in \eqref{eq:tikhonov} have different roles. The terminal state and target belong to $L^2(\Omega)$, so their tracking error is measured in the state-space norm $\|\cdot\|_{L^2(\Omega)}$. The control belongs to $H^1(\Omega_T)$, and the Tikhonov term uses $\|\cdot\|_{H^1(\Omega_T)}$ to penalize both its magnitude and its space-time variation. Thus, the use of these two spaces in the same functional is intentional rather than an interchange of norms.

\begin{remark}[Continuous and sampled regularization]\label{rem:L2_vs_H1}
We use the same $H^1(\Omega_T)$ norm as in the numerical loss. The sampled quantity $\mathcal{L}_{\rm reg}$ is a quadrature approximation of the continuous squared $H^1(\Omega_T)$ norm; fixed normalization factors can be absorbed into $\alpha$. The estimates below therefore require the stated quadrature-accuracy assumption. An $L^2(\Omega_T)$ growth bound alone would not imply an $H^1(\Omega_T)$ growth bound, because the continuous embedding gives the opposite implication: for every $v\in H^1(\Omega_T)$, $\|v\|_{L^2(\Omega_T)}\le C_{\mathrm{emb}}\|v\|_{H^1(\Omega_T)}$, where $C_{\mathrm{emb}}>0$ is the embedding constant.
\end{remark}

Since $\frac12\|\mathcal{S}f-u_d\|_{L^2(\Omega)}^2$ is convex and $\frac{\alpha}{2}\|f\|_{H^1(\Omega_T)}^2$ is strongly convex with constant $\alpha$, the functional $\mathcal{J}_\alpha$ is $\alpha$-strongly convex on $H^1(\Omega_T)$.
In particular, its unique global minimizer:
 \[f_\alpha \in \arg\min_{f\in H^1(\Omega_T)} \mathcal{J}_\alpha(f)\] 
satisfies the quadratic growth condition:
\begin{equation}\label{eq:growth}
\mathcal{J}_\alpha(f) - \mathcal{J}_\alpha(f_\alpha) \ge \frac{\alpha}{2}\|f-f_\alpha\|_{H^1(\Omega_T)}^2, \qquad \forall f\in H^1(\Omega_T).
\end{equation}

\subsubsection{Error estimate for linear problems}

To quantify the PINNs error, we state explicitly the approximation property required in the proof.

\begin{assumption}[Network expressivity]\label{ass:express}
Let $(u,f)$ be any admissible state--control pair satisfying the linear PDE~\eqref{eq:state_pde}, with $u\in H^2(\Omega_T)$ and $f\in H^1(\Omega_T)$. For every prescribed tolerance $\delta>0$, assume that the state and control network classes contain a comparison pair $(u_{\tilde\theta},f_{\tilde\eta})$. Equivalently, there exist parameters $\tilde\theta=\tilde\theta(u,\delta)$ and $\tilde\eta=\tilde\eta(f,\delta)$ such that
\[
\|u_{\tilde{\theta}} - u\|_{H^1(\Omega_T)} \le \delta,\qquad
\|f_{\tilde{\eta}} - f\|_{H^1(\Omega_T)} \le \delta,
\]
and their empirical constraint residuals satisfy
\[
\mathcal{L}_{\mathrm{int}}(\tilde{\theta},\tilde{\eta}) + \mathcal{L}_{\mathrm{sb}}(\tilde{\theta}) + \mathcal{L}_{\mathrm{tb}}(\tilde{\theta}) \le C_{\mathrm{app}}\delta^2,
\]
where $C_{\mathrm{app}}>0$ is the \emph{approximation constant}. It may depend on the domain, PDE coefficients, and network classes, but is independent of $\delta$. The tildes distinguish these comparison parameters from the trained parameters $(\theta^*,\eta^*)$; the optimizer is not assumed to produce $(\tilde\theta,\tilde\eta)$.
\end{assumption}

\begin{remark}
The two norm inequalities express Sobolev approximation of the prescribed state and control; classical approximation results for smooth activation functions support this part~\cite{Hornik1991,Pinkus1999}. The residual estimate is an additional compatibility requirement and does not follow from the displayed $H^1(\Omega_T)$ state approximation alone, particularly when the PDE contains second-order derivatives. Assumption~\ref{ass:express} is used only as an existence argument: in the proof, the global PINNs minimizer is compared with this pair to bound the trained loss from above by the continuous optimal value.
\end{remark}

\begin{assumption}[Source condition]\label{ass:source}
The minimum-energy control $f_{\min}$ lies in the range of the operator $\mathcal{S}^*\mathcal{S}$, i.e.\ $f_{\min}\in\mathcal{R}(\mathcal{S}^*\mathcal{S})$, where $\mathcal{S}^*:L^2(\Omega)\to H^1(\Omega_T)$ is the adjoint defined using the $H^1(\Omega_T)$ inner product on the control space.
\end{assumption}

\begin{remark}[Verification of the source condition for the heat equation]\label{rem:source_verify}
For the 1D heat equation on $\Omega=(0,\pi)$ with zero Dirichlet boundary conditions, the spatial part of the solution operator can be analyzed in the sine basis $\{\sin(kx)\}_{k\ge1}$.
However, membership of $f_{\min}$ in $H^1(\Omega_T)$ alone does not imply the range condition $f_{\min}\in\mathcal{R}(\mathcal{S}^*\mathcal{S})$. The latter imposes additional decay relative to the singular values of the full space-time control-to-terminal-state map and is therefore a genuine smoothness assumption.
For a specified target and control space it can be checked through the corresponding singular-system expansion; it is not claimed here to hold automatically for every attainable heat-equation target.
\end{remark}

\begin{theorem}[Control error estimate, linear case]\label{thm:control_error}
Let $\mathcal{F}$ be linear and let $f_{\min}$ be the minimum-energy control~\eqref{eq:minenergy}.
Suppose Assumptions~\ref{ass:source} and~\ref{ass:express} hold.
Let $(u_{\theta^*}, f_{\eta^*})$ be a global minimizer of the PINNs loss~\eqref{eq:loss} with $0<\alpha\le1$ and set $w_J = 1$ for the analysis.
Let $0<\epsilon\le1$. Assume the continuous residuals are small and the quadrature discrepancies in the $L^2(\Omega)$ target term and the $H^1(\Omega_T)$ regularization term are bounded by $C_{\mathrm{quad}}\sqrt{\epsilon}$ for the trained and comparison networks, where $C_{\mathrm{quad}}>0$ is independent of $\epsilon$ and $\alpha$:
\begin{equation}\label{eq:continuous_residual}
\|\mathcal{R}_{\mathrm{int}}\|_{L^2(\Omega_T)}^2 + \|\mathcal{R}_{\mathrm{sb}}\|_{L^2(\Sigma_T)}^2 + \|\mathcal{R}_{\mathrm{tb}}\|_{L^2(\Omega)}^2 \le \epsilon.
\end{equation}
Then there exist constants $C_H,C_L>0$ (independent of $\epsilon$ and $\alpha$) such that
\[
\|f_{\eta^*} - f_{\min}\|_{H^1(\Omega_T)}
\le C_H\Bigl( \frac{\epsilon^{1/4}}{\sqrt{\alpha}} + \alpha \Bigr),
\]
and, consequently,
\[
\|f_{\eta^*} - f_{\min}\|_{L^2(\Omega_T)}
\le C_L\Bigl( \frac{\epsilon^{1/4}}{\sqrt{\alpha}} + \alpha \Bigr).
\]
If $\alpha$ is chosen proportional to $\epsilon^{1/6}$, the bound becomes $\mathcal{O}(\epsilon^{1/6})$.
\end{theorem}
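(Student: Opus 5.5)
We split the error with the triangle inequality,
\[
\|f_{\eta^*}-f_{\min}\|_{H^1(\Omega_T)}\le \|f_{\eta^*}-f_\alpha\|_{H^1(\Omega_T)}+\|f_\alpha-f_{\min}\|_{H^1(\Omega_T)},
\]
where $f_\alpha$ is the unique minimizer of the Tikhonov functional \eqref{eq:tikhonov}. The second term is the pure regularization error. Under Assumption~\ref{ass:source}, the classical result for Tikhonov regularization with exact data \cite{engl1996regularization} gives $\|f_\alpha-f_{\min}\|_{H^1(\Omega_T)}\le C\alpha$. To see this, write $f_{\min}=\mathcal{S}^*\mathcal{S}w$. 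The optimality condition $(\mathcal{S}^*\mathcal{S}+\alpha I)f_\alpha=\mathcal{S}^*u_d=\mathcal{S}^*\mathcal{S}f_{\min}$ then yields $f_\alpha-f_{\min}=-\alpha(\mathcal{S}^*\mathcal{S}+\alpha I)^{-1}\mathcal{S}^*\mathcal{S}w$. Since $\|(\mathcal{S}^*\mathcal{S}+\alpha I)^{-1}\mathcal{S}^*\mathcal{S}\|\le1$, we get the bound with $C=\|w\|_{H^1(\Omega_T)}$. This step is routine.

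The first term is handled by the quadratic growth \eqref{eq:growth}: it suffices to show $\mathcal{J}_\alpha(f_{\eta^*})-\mathcal{J}_\alpha(f_\alpha)\le C\sqrt{\epsilon}$, since then $\|f_{\eta^*}-f_\alpha\|_{H^1(\Omega_T)}\le \sqrt{2C}\,\epsilon^{1/4}/\sqrt{\alpha}$. The upper side comes from Proposition~\ref{prop:consistency}. We apply Assumption~\ref{ass:express} with $\delta=\sqrt{\epsilon/C_{\mathrm{app}}}$ to the pair $(u(f_\alpha),f_\alpha)$; this requires $u(f_\alpha)\in H^2(\Omega_T)$, which we accept from parabolic regularity for $H^1$ controls and smooth $\partial\Omega$. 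The resulting comparison networks have constraint losses at most $\epsilon$. Together with the quadrature hypothesis, this gives the sampled bound $\mathcal{L}_{\rm target}(\theta^*)+\alpha\mathcal{L}_{\rm reg}(\eta^*)\le\mathcal{J}_\alpha(f_\alpha)+C'\sqrt{\epsilon}$. Using the quadrature hypothesis once more for the trained networks, we convert this into the continuous inequality
\[
\tfrac12\|u_{\theta^*}(\cdot,T)-u_d\|_{L^2(\Omega)}^2+\tfrac{\alpha}{2}\|f_{\eta^*}\|_{H^1(\Omega_T)}^2\le \mathcal{J}_\alpha(f_\alpha)+C\sqrt{\epsilon},
\]
with normalizations absorbed as in Remark~\ref{rem:L2_vs_H1}.

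The main obstacle is that this bound involves the network terminal state $u_{\theta^*}(\cdot,T)$, whereas $\mathcal{J}_\alpha(f_{\eta^*})$ involves the true terminal state $\mathcal{S}f_{\eta^*}$. We bridge the gap with a stability estimate for the linear PDE. The difference $e=u_{\theta^*}-u_{f_{\eta^*}}$ solves the state equation with source $\mathcal{R}_{\mathrm{int}}$, boundary data $\mathcal{R}_{\mathrm{sb}}$ and initial data $\mathcal{R}_{\mathrm{tb}}$. We will assume (or cite) an energy estimate of the form $\|e(\cdot,T)\|_{L^2(\Omega)}\le C_{\mathrm{stab}}\bigl(\|\mathcal{R}_{\mathrm{int}}\|+\|\mathcal{R}_{\mathrm{sb}}\|+\|\mathcal{R}_{\mathrm{tb}}\|\bigr)\le C\sqrt{\epsilon}$ by \eqref{eq:continuous_residual}. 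The boundary residual in $L^2(\Sigma_T)$ is the delicate point: for Dirichlet data one needs a very weak (transposition) solution estimate, or one treats the boundary condition as exactly imposed. We would state it as a stability hypothesis if necessary.

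Writing $a=\|u_{\theta^*}(\cdot,T)-u_d\|_{L^2(\Omega)}$ and $b=\|\mathcal{S}f_{\eta^*}-u_d\|_{L^2(\Omega)}$, we have $|b-a|\le C\sqrt{\epsilon}$. Hence $\tfrac12 b^2\le\tfrac12 a^2+aC\sqrt{\epsilon}+\tfrac12 C^2\epsilon$. The quantity $a$ is bounded uniformly in $\alpha\le1$ and $\epsilon\le1$, because $a^2\le 2\mathcal{J}_\alpha(f_\alpha)+2C\le 2\mathcal{J}_1(0)+2C$ (using $\mathcal{J}_\alpha(f_\alpha)\le\mathcal{J}_\alpha(0)$). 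It follows that $\mathcal{J}_\alpha(f_{\eta^*})\le\mathcal{J}_\alpha(f_\alpha)+C\sqrt{\epsilon}$ with $C$ independent of $\alpha$ and $\epsilon$.

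Combining the two terms gives the $H^1$ bound. The $L^2(\Omega_T)$ bound follows from the embedding constant $C_{\mathrm{emb}}$ in Remark~\ref{rem:L2_vs_H1}. Finally, with $\alpha\sim\epsilon^{1/6}$ we get $\epsilon^{1/4}\alpha^{-1/2}=\epsilon^{1/6}$, which yields the rate $\mathcal{O}(\epsilon^{1/6})$.
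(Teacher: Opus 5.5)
Your proposal is correct and follows essentially the same route as the paper: you split the error through $f_\alpha$, use the quadratic growth bound \eqref{eq:growth}, compare the trained networks with approximations of $(u(f_\alpha),f_\alpha)$ from Assumption~\ref{ass:express}, and use a PDE stability estimate to pass from $u_{\theta^*}(\cdot,T)$ to $\mathcal{S}f_{\eta^*}$. Your explicit spectral derivation of $\|f_\alpha-f_{\min}\|_{H^1(\Omega_T)}\le\alpha\|w\|_{H^1(\Omega_T)}$, your uniform bound on $a$ via $\mathcal{J}_\alpha(f_\alpha)\le\mathcal{J}_\alpha(0)$, and your caveat about the Dirichlet boundary residual in the stability estimate are small improvements on steps the paper only cites or sketches.
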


\begin{proof}
To avoid cumbersome notation, in this proof $\mathcal{L}_{\rm target}$ and $\mathcal{L}_{\rm reg}$ also denote their normalized continuous counterparts when they occur inside an analytic estimate. Each passage from a sampled loss to its continuous counterpart contributes at most $C_{\mathrm{quad}}\sqrt{\epsilon}$ by assumption and is absorbed into the constants below.

Decompose $f_{\eta^*} - f_{\min} = (f_{\eta^*} - f_\alpha) + (f_\alpha - f_{\min})$, where $f_\alpha$ minimizes $\mathcal{J}_\alpha$.
Under Assumption~\ref{ass:source}, $\|f_\alpha - f_{\min}\|_{H^1(\Omega_T)} \le C_1\alpha$~\cite{engl1996regularization}, where $C_1>0$ is the source-condition constant and is independent of $\epsilon$ and $\alpha$.

To bound $\|f_{\eta^*} - f_\alpha\|_{H^1(\Omega_T)}$, we use the strong convexity of $\mathcal{J}_\alpha$ on $H^1(\Omega_T)$.
From~\eqref{eq:growth} we have:
\begin{equation}\label{eq:A}
\frac{\alpha}{2}\|f_{\eta^*} - f_\alpha\|_{H^1(\Omega_T)}^2 \le \mathcal{J}_\alpha(f_{\eta^*}) - \mathcal{J}_\alpha(f_\alpha).
\end{equation}

\medskip
\noindent\textit{Step~1: Upper bound for $\mathcal{J}_\alpha(f_{\eta^*})$.}
Let $u(f_{\eta^*})$ be the exact PDE solution for the control $f_{\eta^*}$.
By the PINNs stability estimate~\cite{Mishra2023}, there is a stability constant $C_{\mathrm{stab}}>0$, independent of $\epsilon$ and $\alpha$, such that
\[
\begin{aligned}
\|u_{\theta^*}(\cdot,T) - \mathcal{S}f_{\eta^*}\|_{L^2(\Omega)}
&= \|u_{\theta^*}(\cdot,T) - u(f_{\eta^*})(\cdot,T)\|_{L^2(\Omega)} \le C_{\mathrm{stab}}\sqrt{\epsilon}.
\end{aligned}
\]

Set $a = \mathcal{S}f_{\eta^*} - u_d$ and $b = u_{\theta^*}(\cdot,T) - u_d$.
We claim that $\|a\|_{L^2(\Omega)}, \|b\|_{L^2(\Omega)} \le M$ for a constant $M>0$ independent of $\epsilon$ and $\alpha\in(0,1]$.
Indeed, global minimality bounds the target loss by the loss of a fixed comparison network, and the assumed quadrature estimate therefore bounds $\|b\|_{L^2(\Omega)}$ independently of $\epsilon$ and $\alpha$. The stability estimate gives $\|a-b\|_{L^2(\Omega)}\le C_{\mathrm{stab}}\sqrt{\epsilon}$, so $\|a\|_{L^2(\Omega)}$ is bounded as well.
Using the norm inequality $|\,\|a\|_{L^2(\Omega)}^2-\|b\|_{L^2(\Omega)}^2\,|\le \|a-b\|_{L^2(\Omega)}(\|a\|_{L^2(\Omega)}+\|b\|_{L^2(\Omega)})$, we obtain:
\[
\begin{aligned}
\bigl|
\|\mathcal{S}f_{\eta^*} - u_d\|_{L^2(\Omega)}^2
- \|u_{\theta^*}(\cdot,T) - u_d\|_{L^2(\Omega)}^2
\bigr|
&\le 2 M C_{\mathrm{stab}}\sqrt{\epsilon} \equiv C_2\sqrt{\epsilon}.
\end{aligned}
\]
Therefore:
\begin{align}
\mathcal{J}_\alpha(f_{\eta^*})
&= \frac12\|\mathcal{S}f_{\eta^*} - u_d\|_{L^2(\Omega)}^2
+ \frac{\alpha}{2}\|f_{\eta^*}\|_{H^1(\Omega_T)}^2 \nonumber\\
&\le \frac12\|u_{\theta^*}(\cdot,T)-u_d\|_{L^2(\Omega)}^2 + \frac{\alpha}{2}\|f_{\eta^*}\|_{H^1(\Omega_T)}^2
+ C_2\sqrt{\epsilon} \nonumber\\
&\le \mathcal{L}_{\mathrm{target}}(\theta^*)
+ \alpha\mathcal{L}_{\mathrm{reg}}(\eta^*)
+ C_2\sqrt{\epsilon},
\label{eq:Jbound}
\end{align}
where we have absorbed the quadrature errors of the discrete norms into the $\sqrt{\epsilon}$ term.

\medskip
\noindent\textit{Step~2: Comparison with the continuous optimum.}
Recall that we set $w_J = 1$ in the loss~\eqref{eq:loss}.
Because the residuals $\mathcal{L}_{\mathrm{int}},\mathcal{L}_{\mathrm{sb}},\mathcal{L}_{\mathrm{tb}}$ are non-negative:
\begin{equation}\label{eq:opt}
\mathcal{L}_{\mathrm{target}}(\theta^*) + \alpha\mathcal{L}_{\mathrm{reg}}(\eta^*)
\le \mathcal{L}(\theta^*,\eta^*)
\le \mathcal{L}(\tilde{\theta},\tilde{\eta})
\end{equation}
for any choice of network parameters $(\tilde{\theta},\tilde{\eta})$.

Now define $u_\alpha:=u(f_\alpha)$ and select $(\tilde{\theta},\tilde{\eta})$ to approximate the continuous optimal pair $(u_\alpha,f_\alpha)$.
By parabolic regularity, $u_\alpha \in H^2(\Omega_T)$ and $f_\alpha \in H^1(\Omega_T)$.
Take $\delta = \sqrt{\epsilon}$ in Assumption~\ref{ass:express}; there exist $(\tilde{\theta},\tilde{\eta})$ such that:
\[
\|u_{\tilde{\theta}} - u_\alpha\|_{H^1(\Omega_T)} \le \sqrt{\epsilon},\qquad
\|f_{\tilde{\eta}} - f_\alpha\|_{H^1(\Omega_T)} \le \sqrt{\epsilon},
\]
and the residuals satisfy:
\[
\mathcal{L}_{\mathrm{int}}(\tilde{\theta},\tilde{\eta}) + \mathcal{L}_{\mathrm{sb}}(\tilde{\theta}) + \mathcal{L}_{\mathrm{tb}}(\tilde{\theta}) \le C_{\mathrm{app}}\,\epsilon.
\]
By the trace inequality $\|v(\cdot,T)\|_{L^2(\Omega)} \le C_{\mathrm{tr}}\|v\|_{H^1(\Omega_T)}$ for $v\in H^1(\Omega_T)$, where $C_{\mathrm{tr}}>0$ is the trace constant, we have:
\[
\|u_{\tilde{\theta}}(\cdot,T) - u_\alpha(\cdot,T)\|_{L^2(\Omega)} \le C_{\mathrm{tr}}\sqrt{\epsilon}.
\]
Hence,
\begin{align}
\mathcal{L}_{\mathrm{target}}(\tilde{\theta})
&= \frac12\|u_{\tilde{\theta}}(\cdot,T) - u_d\|_{L^2(\Omega)}^2 \nonumber\\
&\le \frac12\bigl(\|\mathcal{S}f_\alpha - u_d\|_{L^2(\Omega)} + C_{\mathrm{tr}}\sqrt{\epsilon}\bigr)^2 \nonumber\\
&\le \frac12\|\mathcal{S}f_\alpha - u_d\|_{L^2(\Omega)}^2 + C_3\sqrt{\epsilon}, \label{eq:target_tilde}
\end{align}
where $C_3 = C_{\mathrm{tr}}\|\mathcal{S}f_\alpha - u_d\|_{L^2(\Omega)} + \frac12 C_{\mathrm{tr}}^2$ is independent of $\epsilon$ (we used $\epsilon \le \sqrt{\epsilon}$ for $\epsilon\le 1$).

Similarly, using $\|f_{\tilde{\eta}} - f_\alpha\|_{H^1(\Omega_T)} \le \sqrt{\epsilon}$,
\begin{align}
\mathcal{L}_{\mathrm{reg}}(\tilde{\eta})
&= \frac12\|f_{\tilde{\eta}}\|_{H^1(\Omega_T)}^2 \nonumber\\
&\le \frac12\bigl(\|f_\alpha\|_{H^1(\Omega_T)} + \sqrt{\epsilon}\bigr)^2 \nonumber\\
&\le \frac12\|f_\alpha\|_{H^1(\Omega_T)}^2 + C_4\sqrt{\epsilon}, \label{eq:reg_tilde}
\end{align}
with $C_4 = \|f_\alpha\|_{H^1(\Omega_T)} + \frac12$ bounded independently of $\epsilon$ and uniformly in $\alpha\in(0,1]$ because $f_\alpha \to f_{\min}$ in $H^1(\Omega_T)$.

Now assemble the total loss for $(\tilde{\theta},\tilde{\eta})$:
\begin{align*}
\mathcal{L}(\tilde{\theta},\tilde{\eta})
&= \mathcal{L}_{\mathrm{int}}(\tilde{\theta},\tilde{\eta}) + \mathcal{L}_{\mathrm{sb}}(\tilde{\theta}) + \mathcal{L}_{\mathrm{tb}}(\tilde{\theta}) + \mathcal{L}_{\mathrm{target}}(\tilde{\theta}) + \alpha\mathcal{L}_{\mathrm{reg}}(\tilde{\eta}) \\
&\le C_{\mathrm{app}}\,\epsilon + \Bigl(\frac12\|\mathcal{S}f_\alpha - u_d\|_{L^2(\Omega)}^2 + C_3\sqrt{\epsilon}\Bigr) + \alpha\Bigl(\frac12\|f_\alpha\|_{H^1(\Omega_T)}^2 + C_4\sqrt{\epsilon}\Bigr).
\end{align*}
Since $\epsilon \le \sqrt{\epsilon}$ for $\epsilon\le 1$, we can absorb $C_{\mathrm{app}}\epsilon$ into a term of order $\sqrt{\epsilon}$.
Thus:
\[
\mathcal{L}(\tilde{\theta},\tilde{\eta}) \le \mathcal{J}_\alpha(f_\alpha) + C_5\sqrt{\epsilon},
\qquad C_5 = C_3 + C_4 + C_{\mathrm{app}},
\]
where $C_5$ is independent of $\epsilon$ and remains bounded for $\alpha \in (0,1]$
(we used $\alpha C_4 \le C_4$).

Combining this with~\eqref{eq:opt} and~\eqref{eq:Jbound} gives:
\[
\mathcal{J}_\alpha(f_{\eta^*}) - \mathcal{J}_\alpha(f_\alpha) \le C_6\sqrt{\epsilon},
\qquad C_6 = C_2 + C_5.
\]

\medskip
\noindent\textit{Step~3: Closing the estimate.}
From~\eqref{eq:A} we obtain:
\[
\|f_{\eta^*} - f_\alpha\|_{H^1(\Omega_T)} \le \sqrt{\frac{2C_6}{\alpha}}\;\epsilon^{1/4}.
\]
Combining the last estimate with the source-condition bound gives the stronger control-space estimate
\[
\begin{aligned}
\|f_{\eta^*} - f_{\min}\|_{H^1(\Omega_T)}
&\le \|f_{\eta^*} - f_\alpha\|_{H^1(\Omega_T)}
+ \|f_\alpha - f_{\min}\|_{H^1(\Omega_T)}\\
&\le C_7\,\frac{\epsilon^{1/4}}{\sqrt{\alpha}} + C_1\alpha.
\end{aligned}
\]
Here $C_7=\sqrt{2C_6}$; choosing $C_H=\max\{C_7,C_1\}$ gives the first estimate in the theorem.
Finally, the continuous embedding $H^1(\Omega_T)\hookrightarrow L^2(\Omega_T)$ yields
\[
\|f_{\eta^*} - f_{\min}\|_{L^2(\Omega_T)}
\le C_{\mathrm{emb}}\|f_{\eta^*} - f_{\min}\|_{H^1(\Omega_T)},
\]
so the embedding constant can be absorbed into $C_L$. This completes the proof.
\end{proof}

\begin{remark}[On the global minimizer assumption]\label{rem:global_min}
The hypothesis that PINN training reaches a global minimizer is a standard idealization in theoretical analyses of PINNs~\cite{Shin2020convergence,Mishra2023,zhang2023stability}.
Global-convergence results are available for particular architectures and sufficiently wide-network regimes~\cite{jiang2023global,zhao2025convergence,gao2026gradient}, but they do not automatically apply to the finite networks used here.
In our experiments the training loss decays to values of order $10^{-5}$-$10^{-7}$ (Figures~2a and~4a), which indicates a low-loss solution but does not by itself certify global optimality. The theorem should therefore be read as conditional on the stated global-minimizer assumption.
\end{remark}

\begin{remark}[Non-attainable targets]\label{rem:nonattain_thm}
Theorem~\ref{thm:control_error} assumes that the target $u_d$ is attainable, so that the minimum-energy control $f_{\min}$ exists.
In the numerical experiments of Section~5 the targets are discontinuous step functions, which are not attainable by smooth parabolic flows; consequently $f_{\min}$ does not exist in the classical sense, and the source condition Assumption~\ref{ass:source} is not strictly satisfied.
Nevertheless, the regularized PINNs framework still produces smooth, physically plausible controls. Proposition~\ref{prop:heuristic} offers a formal local explanation only for the nonlinear Burgers case. For the linear heat case, standard regularized least-squares theory explains stability for a non-attainable target, but the convergence estimate to an attainable minimum-energy control stated in Theorem~\ref{thm:control_error} does not apply. The experiments therefore demonstrate performance beyond the precise scope of that theorem.
\end{remark}

\begin{corollary}[Convergence of the state]\label{cor:u_convergence}
Under the same assumptions as in Theorem~\ref{thm:control_error},
the PINN state $u_{\theta^*}$ converges to the true optimal state
$u(f_{\min})$ in $L^2(\Omega_T)$.
More precisely, there exists a constant $C_u>0$ such that:
\[
\|u_{\theta^*} - u(f_{\min})\|_{L^2(\Omega_T)} \le
C_u\Bigl( \frac{\epsilon^{1/4}}{\sqrt{\alpha}} + \alpha \Bigr).
\]
\end{corollary}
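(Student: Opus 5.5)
The plan is to split the state error with the triangle inequality through the exact state driven by the learned control:
\[
\|u_{\theta^*}-u(f_{\min})\|_{L^2(\Omega_T)}
\le \|u_{\theta^*}-u(f_{\eta^*})\|_{L^2(\Omega_T)}+\|u(f_{\eta^*})-u(f_{\min})\|_{L^2(\Omega_T)}.
\]
The first term is a pure PINN residual error. The second term is a control-to-state stability term for the exact linear PDE.

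\emph{First term.} I would invoke the same PINN stability estimate~\cite{Mishra2023} used in Step~1 of the proof of Theorem~\ref{thm:control_error}, now in its space-time form. The difference $w:=u_{\theta^*}-u(f_{\eta^*})$ solves the linear parabolic problem with interior source $\mathcal{R}_{\mathrm{int}}$, boundary data $\mathcal{R}_{\mathrm{sb}}$ and initial data $\mathcal{R}_{\mathrm{tb}}$. Hence the standard energy estimate (test with $w$ and apply Gronwall) gives
\[
\|w\|_{L^2(\Omega_T)}\le C_{\mathrm{stab}}\sqrt{\epsilon}
\]
under the continuous residual bound~\eqref{eq:continuous_residual}. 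The constant is the same type as $C_{\mathrm{stab}}$ in Step~1, possibly enlarged by a factor $\sqrt{T}$.

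\emph{Second term.} By linearity, $u(f_{\eta^*})-u(f_{\min})$ is the solution with zero initial and boundary data and source $f_{\eta^*}-f_{\min}$. The standard well-posedness estimate for weak solutions in $W(0,T;V)$ gives a constant $C_{\mathrm{ps}}$ with
\[
\|u(f_{\eta^*})-u(f_{\min})\|_{L^2(\Omega_T)}\le C_{\mathrm{ps}}\|f_{\eta^*}-f_{\min}\|_{L^2(\Omega_T)}.
\]
The second estimate of Theorem~\ref{thm:control_error} then bounds this by $C_{\mathrm{ps}}C_L(\epsilon^{1/4}/\sqrt{\alpha}+\alpha)$.

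\emph{Combining.} Since $0<\alpha\le1$ and $\epsilon\le1$, we have $\sqrt{\epsilon}\le\epsilon^{1/4}\le\epsilon^{1/4}/\sqrt{\alpha}$. The first term is therefore dominated by the same rate, and we may take $C_u=C_{\mathrm{stab}}+C_{\mathrm{ps}}C_L$.

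The main obstacle is justifying the PINN stability estimate with inhomogeneous boundary residuals measured only in $L^2(\Sigma_T)$. For Dirichlet data, an $L^2$ boundary residual does not directly fit the variational energy estimate. I would handle this the way the paper already does, by citing the stability result of~\cite{Mishra2023} as used in Step~1, which is tacitly assumed there under sufficient smoothness of the networks. Otherwise one would need a lifting argument or a very-weak-solution (transposition) estimate to absorb the boundary residual.
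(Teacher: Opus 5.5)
Your proposal is correct and follows the paper's proof essentially step for step. It uses the same triangle inequality through $u(f_{\eta^*})$, the cited PINN stability bound of \cite{Mishra2023} for the first term, the $L^2(\Omega_T)$-Lipschitz control-to-state estimate combined with the $L^2$ bound of Theorem~\ref{thm:control_error} for the second, and $\sqrt{\epsilon}\le\epsilon^{1/4}\le\epsilon^{1/4}/\sqrt{\alpha}$ to merge them; your $C_u=C_{\mathrm{stab}}+C_{\mathrm{ps}}C_L$ corresponds to the paper's $C_{\mathrm{PINN}}+C_{\mathrm{par}}C_L$.

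The Dirichlet-residual issue you flag is treated in the paper only by that same citation. The boundary term $\int_{\Sigma_T}\partial_\nu w\,w$ in the energy estimate typically gives only $\|w\|_{L^2(\Omega_T)}\lesssim\epsilon^{1/4}$, with a constant depending on derivative bounds of the networks. Even that weaker rate would still suffice here, since $\epsilon^{1/4}\le\epsilon^{1/4}/\sqrt{\alpha}$.
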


\begin{proof}
The proof combines three ingredients:
\begin{enumerate}
  \item \textbf{Lipschitz stability of the forward PDE.}  
  For the linear parabolic equation~\eqref{eq:state_pde}, standard energy estimates (see, e.g., \cite{evans2010partial}) imply that the solution map $f \mapsto u_f$ is Lipschitz continuous from $L^2(\Omega_T)$ to $L^2(\Omega_T)$.  
  Hence, there exists a constant $C_{\mathrm{par}}>0$, depending only on $\Omega$, $T$, and the coefficients of the PDE, such that:
  \[
  \|u(f_1) - u(f_2)\|_{L^2(\Omega_T)} \le C_{\mathrm{par}} \|f_1 - f_2\|_{L^2(\Omega_T)}
  \]
  for all admissible controls $f_1,f_2$.  
  In particular,
  \begin{equation}\label{eq:lip}
  \|u(f_{\eta^*}) - u(f_{\min})\|_{L^2(\Omega_T)}
  \le C_{\mathrm{par}} \|f_{\eta^*} - f_{\min}\|_{L^2(\Omega_T)}.
  \end{equation}

  \item \textbf{Stability of the PINN approximation.}  
  By the generalization error estimates for PINNs applied to parabolic PDEs \cite{Mishra2023}, the network state $u_{\theta^*}$ stays close to the true solution of the PDE with control $f_{\eta^*}$.  
  More precisely, there exists a constant $C_{\mathrm{PINN}}>0$, depending only on $\Omega$, $T$, and the network architecture, such that:
  \begin{equation}\label{eq:pinn}
  \|u_{\theta^*} - u(f_{\eta^*})\|_{L^2(\Omega_T)} \le C_{\mathrm{PINN}} \sqrt{\epsilon}.
  \end{equation}
  (This estimate is the standard ``stability bound'' used throughout the PINN literature; the constant $C_{\mathrm{PINN}}$ absorbs the quadrature errors and the approximation error of the network class.)

  \item \textbf{Error bound for the control.}  
  Theorem~\ref{thm:control_error} provides a quantitative estimate for the deviation of the learned control from the minimum-energy control:
  \begin{equation}\label{eq:control_bound}
  \|f_{\eta^*} - f_{\min}\|_{L^2(\Omega_T)} \le C_L\Bigl(\frac{\epsilon^{1/4}}{\sqrt{\alpha}} + \alpha\Bigr),
  \end{equation}
  where $C_L$ is the constant from Theorem~\ref{thm:control_error}.
\end{enumerate}

Now combine these bounds using the triangle inequality:
\[
\begin{aligned}
\|u_{\theta^*} - u(f_{\min})\|_{L^2(\Omega_T)}
&\le \|u_{\theta^*} - u(f_{\eta^*})\|_{L^2(\Omega_T)} + \|u(f_{\eta^*}) - u(f_{\min})\|_{L^2(\Omega_T)} \\
&\le C_{\mathrm{PINN}} \sqrt{\epsilon} + C_{\mathrm{par}}C_L \Bigl(\frac{\epsilon^{1/4}}{\sqrt{\alpha}} + \alpha \Bigr) \\
&\le C_u \Bigl( \frac{\epsilon^{1/4}}{\sqrt{\alpha}} + \alpha \Bigr),
\end{aligned}
\]
where we used $\sqrt{\epsilon} \le \epsilon^{1/4}$ for $\epsilon\le 1$ and set:
\[
C_u = C_{\mathrm{PINN}} + C_{\mathrm{par}}C_L.
\]
This completes the proof.
\end{proof}

\subsubsection{Heuristic estimate for nonlinear problems}

For nonlinear PDEs such as the Burgers' equation, the control-to-terminal-state map $\mathcal{S}$ is not linear and the Tikhonov functional~\eqref{eq:tikhonov} is non-convex.
Consequently, the strong convexity argument does not apply globally.
Nevertheless, if the learned control $f_{\eta^*}$ is sufficiently close to the continuous minimizer $f_\alpha$, one can linearize the problem around $f_\alpha$ and apply the previous analysis formally.
This leads to the following heuristic estimate, which serves as a guideline rather than a rigorous theorem.

\begin{proposition}[Nonlinear case, formal estimate]\label{prop:heuristic}
Let $\mathcal{F}$ be a nonlinear operator that is Fr\'{e}chet differentiable with respect to $u$ (e.g.\ Burgers' equation, see~\cite{troltzsch2001sqp}).
Assume that the target $u_d$ is attainable. Let $\mathcal{S}'(f_\alpha):H^1(\Omega_T)\to L^2(\Omega)$ denote the Fr\'{e}chet derivative of the control-to-terminal-state map at $f_\alpha$, and let $\mathcal{S}'(f_\alpha)^*$ denote its adjoint with respect to the $H^1(\Omega_T)$ control-space inner product. Assume the analogous source condition
\[
f_{\min}\in\mathcal{R}\!\left(\mathcal{S}'(f_\alpha)^*\mathcal{S}'(f_\alpha)\right).
\]
Suppose in addition that the PINNs minimizer $f_{\eta^*}$ lies in a neighborhood of $f_\alpha$ where the linear approximation is valid, and that the continuous residual level $\epsilon$ in~\eqref{eq:continuous_residual} is small enough.
Then, formally, there is a local constant $C_{\mathrm{nl}}>0$, independent of $\epsilon$ and $\alpha$, such that the bound
\[
\|f_{\eta^*} - f_{\min}\|_{L^2(\Omega_T)} \le C_{\mathrm{nl}}\left(\frac{\epsilon^{1/4}}{\sqrt{\alpha}} + \alpha\right)
\]
is expected; $C_{\mathrm{nl}}$ may depend on the linearization error and the chosen neighborhood of $f_\alpha$.
\end{proposition}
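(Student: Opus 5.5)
The plan is to mimic the three-step structure of the proof of Theorem~\ref{thm:control_error}, replacing the global strong convexity of $\mathcal{J}_\alpha$ by a \emph{local} quadratic growth condition around $f_\alpha$. I split
\[
f_{\eta^*}-f_{\min}=(f_{\eta^*}-f_\alpha)+(f_\alpha-f_{\min})
\]
and bound the two pieces separately in $H^1(\Omega_T)$. The $L^2(\Omega_T)$ statement then follows from the embedding constant $C_{\mathrm{emb}}$ of Remark~\ref{rem:L2_vs_H1}.

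For the regularization error $\|f_\alpha-f_{\min}\|_{H^1(\Omega_T)}$, I will invoke the nonlinear Tikhonov theory of \cite{engl1996regularization}. Under the source condition $f_{\min}\in\mathcal{R}(\mathcal{S}'(f_\alpha)^*\mathcal{S}'(f_\alpha))$ and a Lipschitz (or tangential cone) condition on $\mathcal{S}'$ near $f_{\min}$, this theory gives $\|f_\alpha-f_{\min}\|\le C_1\alpha$. Formally, this amounts to replacing $\mathcal{S}$ by its linearization and repeating the linear spectral argument. The linearization remainder is $O(\|f_\alpha-f_{\min}\|^2)$ and is absorbed once the neighborhood is small.

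For the discretization error, I first establish local quadratic growth. Expanding $\mathcal{J}_\alpha$ to second order at the critical point $f_\alpha$, where the first variation vanishes, gives
\[
\mathcal{J}_\alpha(f)-\mathcal{J}_\alpha(f_\alpha)=\tfrac12\|\mathcal{S}'(f_\alpha)h\|^2+\tfrac{\alpha}{2}\|h\|^2+\bigl(\mathcal{S}(f_\alpha)-u_d,\mathcal{S}''(f_\alpha)[h,h]\bigr)+o(\|h\|^2),
\]
with $h=f-f_\alpha$. The residual $\mathcal{S}(f_\alpha)-u_d$ is small because the target is attainable and $\alpha$ is small. So in a small neighborhood the curvature term is dominated by $\tfrac{\alpha}{4}\|h\|^2$, yielding a growth constant $\alpha/4$ in place of $\alpha/2$ in \eqref{eq:growth}.

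Next I bound $\mathcal{J}_\alpha(f_{\eta^*})-\mathcal{J}_\alpha(f_\alpha)\le C\sqrt{\epsilon}$ exactly as in Steps~1 and~2 of the proof of Theorem~\ref{thm:control_error}. This uses three ingredients. The first is a PINN stability estimate for the nonlinear state equation, $\|u_{\theta^*}(\cdot,T)-\mathcal{S}(f_{\eta^*})\|\le C_{\mathrm{stab}}\sqrt{\epsilon}$, assumed as in \cite{Mishra2023}, which holds for Burgers under smoothness bounds. The second is the global-minimality comparison with a network pair approximating $(u(f_\alpha),f_\alpha)$, i.e.\ a nonlinear version of Assumption~\ref{ass:express}. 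The third is the quadrature hypothesis. Combining these gives $\|f_{\eta^*}-f_\alpha\|\le\sqrt{4C/\alpha}\,\epsilon^{1/4}$, and the triangle inequality finishes the argument.

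The main obstacle is making local quadratic growth genuinely uniform in $\alpha$. The curvature contribution from $\mathcal{S}''$ must be beaten by $\alpha\|h\|^2$, which requires $\|\mathcal{S}(f_\alpha)-u_d\|=o(\alpha)$. Under the source condition the residual is in fact $O(\alpha)$, so I would additionally need a smallness condition on $\|\mathcal{S}''\|$ relative to the source element. It is precisely here that the constant $C_{\mathrm{nl}}$ becomes dependent on the linearization error and the neighborhood, which is why the result is stated only formally.
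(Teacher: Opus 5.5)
Your proposal follows the same route as the paper's sketch. You split through $f_\alpha$, obtain local quadratic growth of $\mathcal{J}_\alpha$ around $f_\alpha$ by linearizing $\mathcal{S}$, rerun Steps~1--2 of the proof of Theorem~\ref{thm:control_error}, and close with the source-condition rate $\|f_\alpha-f_{\min}\|_{H^1(\Omega_T)}\le C_1\alpha$ and the embedding, so it is correct at the same formal level.

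Your second-order expansion is in fact more careful than the paper's first-order substitution $\mathcal{S}(f)\approx\mathcal{S}(f_\alpha)+\mathcal{S}'(f_\alpha)(f-f_\alpha)$. It exposes the curvature term $\tfrac12\bigl(\mathcal{S}(f_\alpha)-u_d,\mathcal{S}''(f_\alpha)[h,h]\bigr)$ (you dropped the factor $\tfrac12$). Under the source condition this term is of the same order $O(\alpha)\|h\|^2$ as the Tikhonov term, and it is exactly what the paper's hypothesis that ``the linear approximation is valid'' silently absorbs. Your extra smallness condition on $\mathcal{S}''$ relative to the source element, of the familiar $L\|w\|<1$ type, is the right way to make that hypothesis precise.
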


\begin{proof}[Sketch of justification]
For $\|f-f_\alpha\|_{H^1(\Omega_T)}$ sufficiently small:
\[
\mathcal{S}(f) \approx \mathcal{S}(f_\alpha) + \mathcal{S}'(f_\alpha)(f-f_\alpha).
\]
Substituting this into $\mathcal{J}_\alpha$ yields a locally strongly convex quadratic functional in $f-f_\alpha$.
Repeating the steps of Theorem~\ref{thm:control_error} within this local region gives the claimed dependence.
A rigorous justification would require a quantitative bound on the nonlinear remainder and a proof that $f_{\eta^*}$ indeed stays inside the region of convexity; this is beyond the scope of the present work.
\end{proof}

\begin{remark}
Proposition~\ref{prop:heuristic} indicates that the error structure $\mathcal{O}(\epsilon^{1/4}/\sqrt{\alpha} + \alpha)$ is robust, at least in a neighborhood of the optimum.
This is consistent with the numerical results for the Burgers' equation (Section~\ref{sec:numerics}), where the regularized PINNs converges to a smooth, physically plausible control even though a rigorous theory is not yet available.
\end{remark}

    \section{Numerical Experiments}
\label{sec:numerics}

We validate the proposed framework on two benchmark problems: the linear heat equation and the nonlinear Burgers' equation. For each, we first train a standard PINN on the forward problem to identify a suitable architecture, then solve the regularized optimal control problem with both $u$ and $f$ parameterized as independent networks. The regularization parameter $\alpha$ is selected via the L-curve criterion, and the selected value is denoted by $\alpha^*$; the weight $w_J$ is determined by the one-dimensional line search described in Section~3. For the forward tests, the reported relative error is the space-time quantity $E_{\mathrm{rel}}:=\|u_\theta-u_{\mathrm{exact}}\|_{L^2(D)}/\|u_{\mathrm{exact}}\|_{L^2(D)}$, where $D$ is the space-time domain of the corresponding experiment and $u_{\mathrm{exact}}$ is its analytical solution; the associated pointwise absolute error is $|u_\theta-u_{\mathrm{exact}}|$. In the control experiments, the \emph{rollout state} is the numerical PDE solution obtained using the learned control $f_{\eta^*}$, as distinct from the independently parameterized state network $u_{\theta^*}$; the pointwise surrogate error is $|u_{\theta^*}-u(f_{\eta^*})|$.

\subsection{Heat Equation}

In the one-dimensional examples below, $\partial_x$ and $\partial_x^2$ denote the first and second derivatives with respect to the spatial variable $x$, respectively. Consider the heat equation:
\begin{equation}\label{eq:heat_pde}
    \begin{cases}
        \partial_t u - \partial_x^2 u = f(x,t), & (x,t) \in (0,\pi) \times (0,1), \\
        u(0,t) = u(\pi,t) = 0, & t \in (0,1), \\
        u(x,0) = \sin(x), & x \in (0,\pi).
    \end{cases}
\end{equation}

\subsubsection{Forward Problem}
Setting $f \equiv 0$, the exact solution is $u_{\mathrm{exact}}(x,t) = \sin(x)e^{-t}$. The state network uses 4 hidden layers $\times$ 50 neurons with $\tanh$ activation. Training uses $N_{\text{int}} = 10{,}000$, $N_{\text{sb}} = 80$, $N_{\text{tb}} = 40$ collocation points, Adam optimizer with initial learning rate $10^{-3}$ decayed by $10\times$ every 3000 epochs over 6000 total epochs. The relative error $E_{\mathrm{rel}}$ on $D=(0,\pi)\times(0,1)$ decreases steadily and the predicted solution closely matches the exact solution (Figure~\ref{fig:forward_heat}), confirming the architecture is well-suited for this problem.

\begin{figure*}[ht!]
    \centering
    \includegraphics[width=\textwidth]{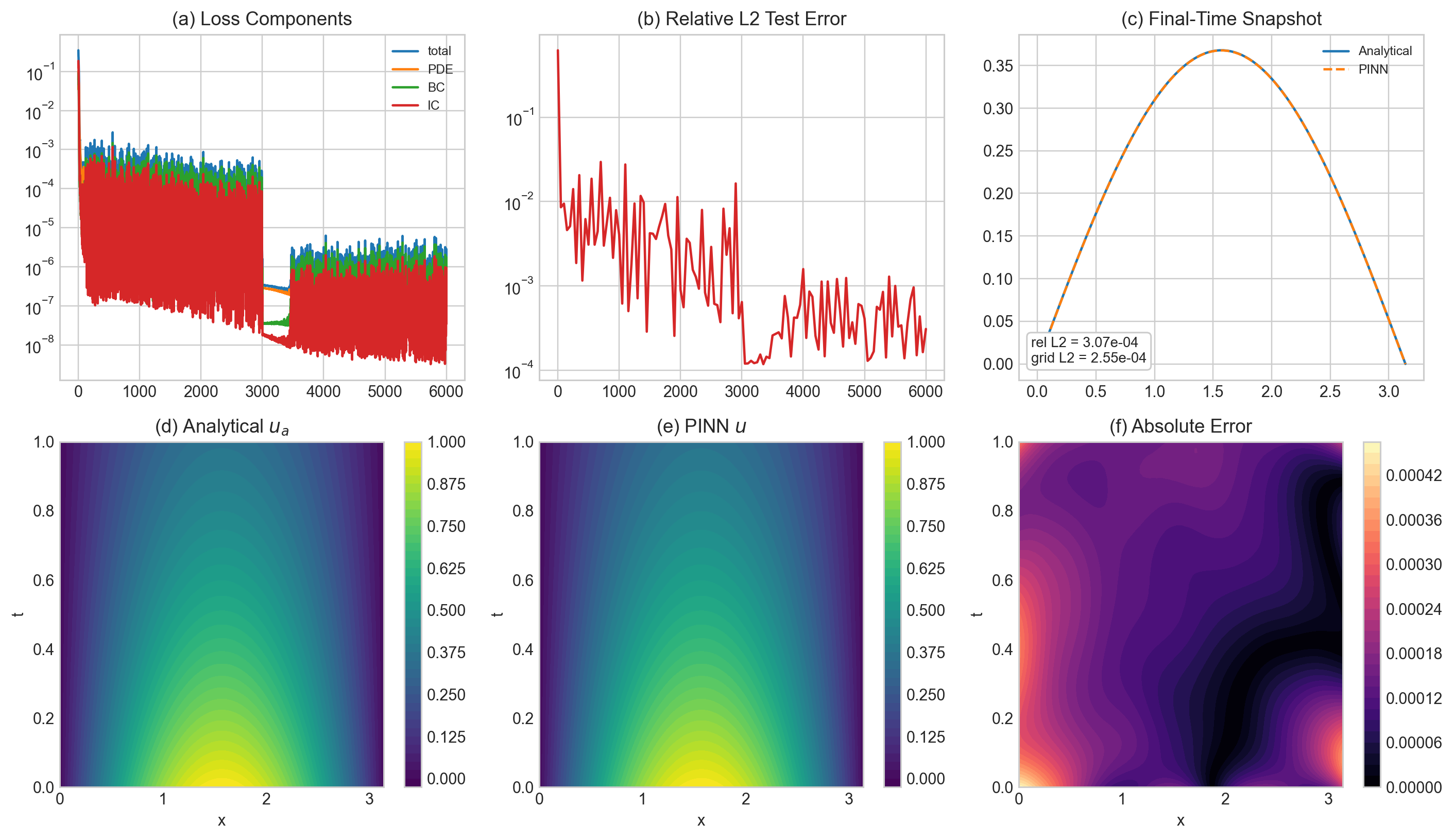}
    \caption{Forward problem results for the heat equation: (a) Loss components; (b) Relative error $E_{\mathrm{rel}}$ on $D=(0,\pi)\times(0,1)$; (c) Predicted vs.\ exact solution at $t=1$; (d) Exact solution; (e) Predicted solution; (f) Pointwise absolute error.}
    \label{fig:forward_heat}
\end{figure*}

\subsubsection{Optimal Control Problem}

The desired terminal state is the discontinuous step function:
\[
u_d(x) = \begin{cases} 1, & x \in [\tfrac{\pi}{3},\tfrac{2\pi}{3}], \\ 0, & \text{otherwise.} \end{cases}
\]
The same architecture and collocation counts are used for both $u_\theta$ and $f_\eta$, with $N_d = N_{\text{reg}} = 100$. The Adam optimizer runs for 10,000 epochs with learning rate $10^{-3}$ decayed $10\times$ at epoch 5000.

Figure~\ref{fig:optimal_heat_overview} summarizes the results. All loss components decay stably (panel~a). Terminal tracking is accurate in both the regularized and unregularized cases (panels~b-c). The key difference appears in panel~(d): the unregularized control ($\alpha=0$) exhibits high-frequency oscillations exceeding $\pm 60$, whereas the regularized control is smooth and physically plausible, confirming that the $H^1(\Omega_T)$ penalty favors a lower-energy control. The L-curve (panel~e) identifies $\alpha^* = 3.16\times10^{-3}$ at the point of maximum curvature; the line search (panel~f) selects $w_J$ by minimizing the rollout terminal objective. Panels~(g)-(i) show that the pointwise error between the rollout state and the PINNs prediction stays below $2.2\times10^{-2}$, with the largest errors near the final time where the discontinuous target induces sharper gradients.

\begin{figure*}[ht!]
    \centering
    \includegraphics[width=\textwidth]{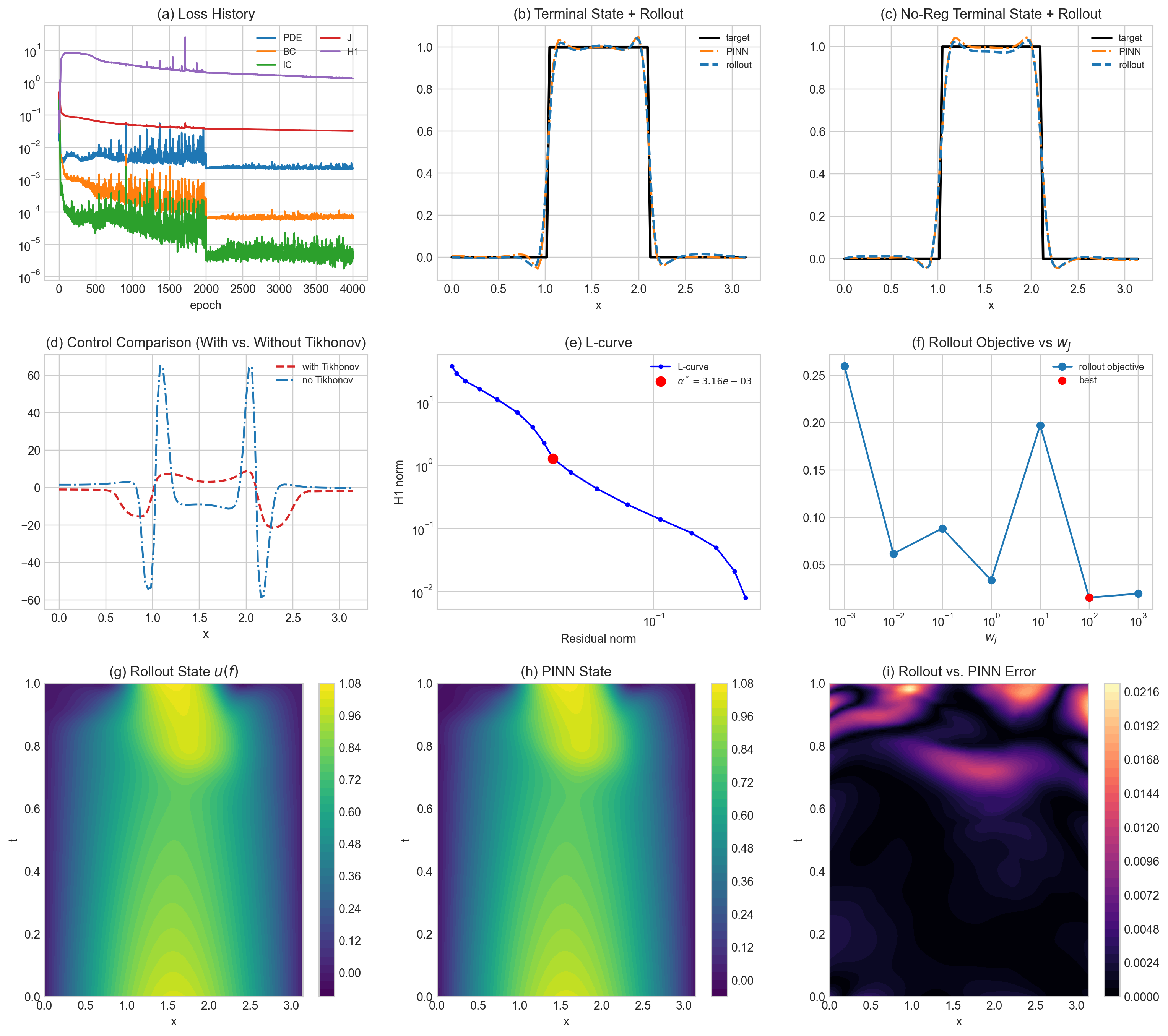}
    \caption{Optimal control results for the heat equation with discontinuous target $u_d$.
    (a) Loss convergence history.
    (b) Terminal state comparison (regularized): PINNs prediction $u_{\theta^*}(\cdot,T)$ and rollout vs.\ $u_d$.
    (c) Same for unregularized ($\alpha=0$).
    (d) Control profiles $f_{\eta^*}(x,1)$ with and without regularization.
    (e) L-curve; optimal $\alpha^* = 3.16\times10^{-3}$.
    (f) One-dimensional line search over $w_J$ using the rollout terminal objective.
    (g) Rollout state $u(f_{\eta^*})$.
    (h) PINNs state $u_{\theta^*}$.
    (i) Pointwise error; maximum $2.2\times10^{-2}$.}
    \label{fig:optimal_heat_overview}
\end{figure*}

Figure~\ref{fig:heat_control_surfaces} displays the control surfaces. The regularized control is smooth and of moderate amplitude, concentrated near $t\approx1$ and the discontinuities of $u_d$ at $x=\pi/3$ and $x=2\pi/3$. The unregularized control shows spikes exceeding $\pm60$ near final time, confirming the stabilizing role of the $H^1(\Omega_T)$ penalty.

\begin{figure*}[ht!]
    \centering
    \includegraphics[width=0.8\textwidth]{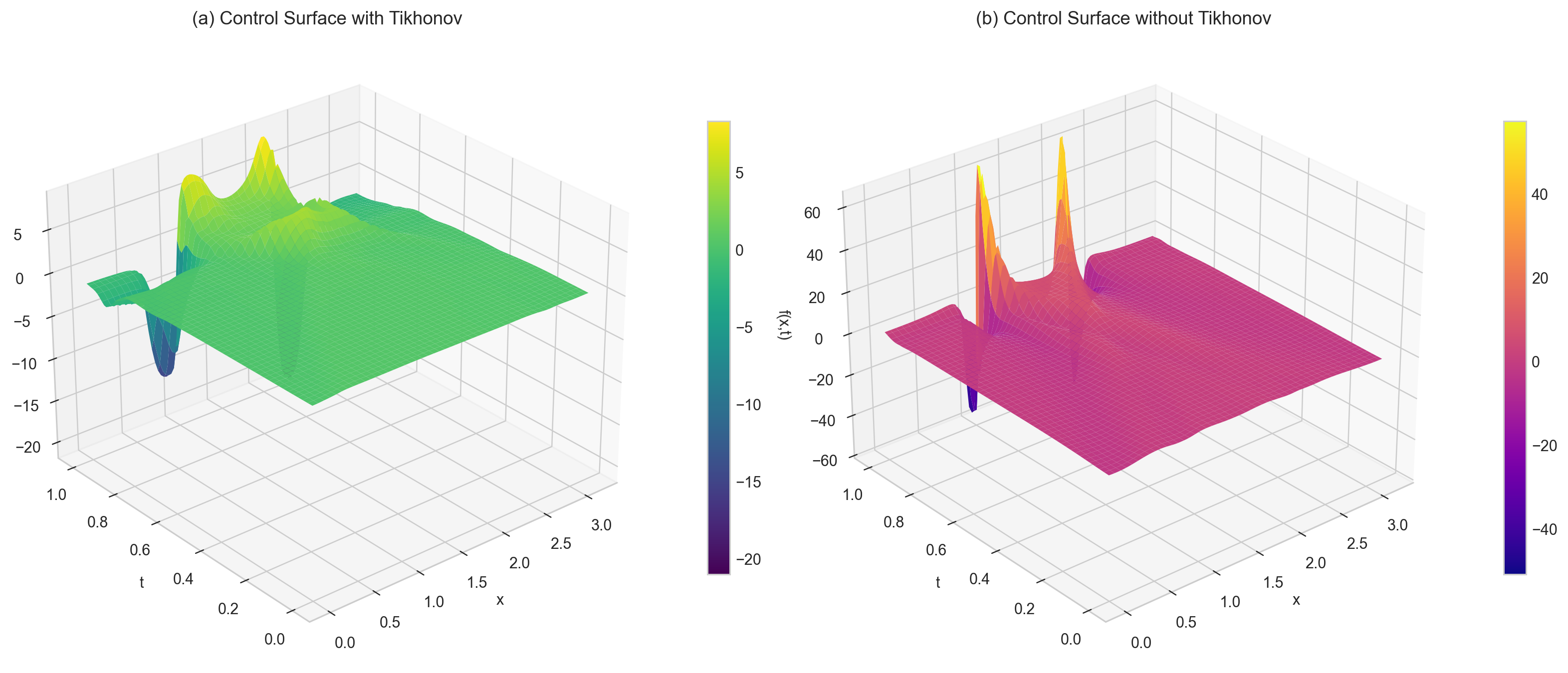}
    \caption{Control surfaces $f_{\eta^*}(x,t)$ for the heat equation:
    (a) regularized ($\alpha^* = 3.16\times10^{-3}$);
    (b) unregularized ($\alpha=0$).}
    \label{fig:heat_control_surfaces}
\end{figure*}

\subsection{Burgers' Equation}

We further evaluate the method on the nonlinear Burgers' equation with viscosity $\nu=0.1$:
\begin{equation}\label{eq:burgers_pde}
    \begin{cases}
        \partial_t u + u\,\partial_x u - \nu\partial_x^2 u = f(x,t), & (x,t) \in (0,1)^2, \\
        u(0,t) = u(1,t) = 0, & t \in (0,1), \\
        u(x,0) = \dfrac{2\nu\pi \sin(\pi x)}{2 + \cos(\pi x)}, & x \in (0,1),
    \end{cases}
\end{equation}

\subsubsection{Forward Problem}
The exact solution for $f\equiv0$ is:
\begin{equation}\label{eq:burgers_analytical}
    u_{\mathrm{exact}}(x,t) = \dfrac{2\nu\pi \sin(\pi x)\,e^{-\nu\pi^2 t}}{2 + \cos(\pi x)\,e^{-\nu\pi^2 t}}.
\end{equation}
The same state-network architecture ($4\times50$, $\tanh$) and collocation counts are used, with 20,000 Adam epochs (learning rate decayed $10\times$ at epoch 10,000). Figure~\ref{fig:forward_burgers} shows reliable convergence: the relative error $E_{\mathrm{rel}}$ on $D=(0,1)^2$ decreases steadily and the predicted solution closely tracks the exact solution across the full space-time domain.

\begin{figure*}[ht!]
    \centering
    \includegraphics[width=\textwidth]{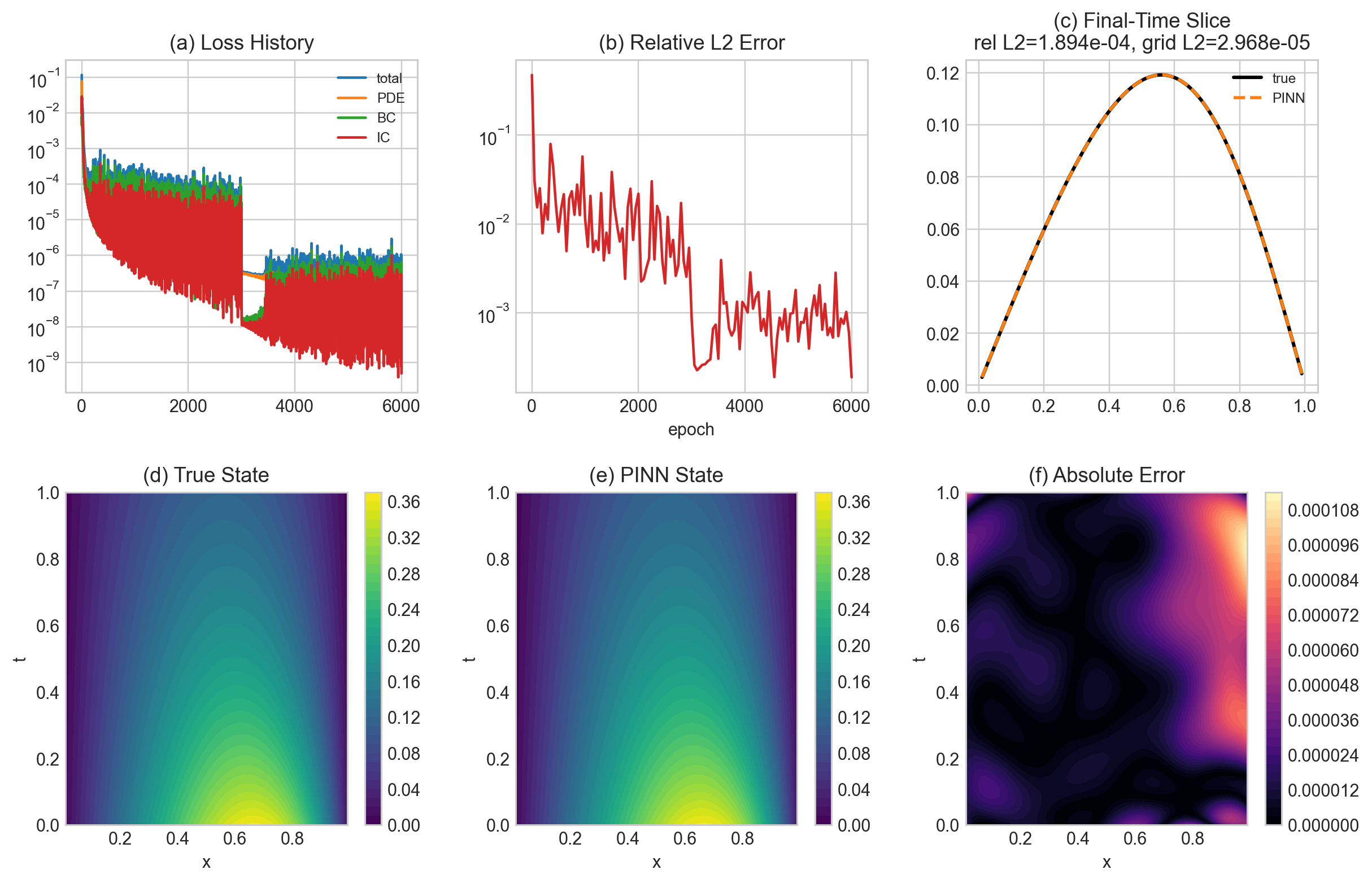}
    \caption{Forward problem results for the Burgers' equation: (a) Loss components; (b) Relative error $E_{\mathrm{rel}}$ on $D=(0,1)^2$; (c) Comparison at $t=1$; (d) Exact solution; (e) Predicted solution; (f) Pointwise absolute error.}
    \label{fig:forward_burgers}
\end{figure*}

\subsubsection{Optimal Control Problem}

The desired terminal state is:
\[
u_d(x) = \begin{cases} 1, & x \in [0.3,0.7], \\ 0, & \text{otherwise.} \end{cases}
\]
The same architecture, collocation counts ($N_{\text{int}}=10{,}000$, $N_{\text{sb}}=80$, $N_{\text{tb}}=40$, $N_d=N_{\text{reg}}=100$), and hyperparameter selection procedure are used, with 20,000 Adam epochs.

Figure~\ref{fig:optimal_burgers_overview} presents the results. All loss components converge stably, with a pronounced drop after epoch 10,000 (learning rate decay); the interior PDE-residual loss $\mathcal{L}_{\mathrm{int}}$ reaches $10^{-4}$-$10^{-5}$ at convergence. Terminal tracking is accurate in both cases (panels~b-c), with natural smoothing near $x\approx0.3$ and $x\approx0.7$ due to the parabolic operator. Panel~(d) shows the key effect: at $t=1$, the unregularized control has sharp spikes ranging approximately from $-50$ to $60$ near the discontinuities, while the regularized control is significantly smoother. The L-curve (panel~e) yields $\alpha^*=5.62\times10^{-5}$, smaller than the heat equation value, reflecting that the nonlinear convection term concentrates control energy near the jump locations so the $H^1(\Omega_T)$ penalty must be weaker to allow necessary localized actuation. The pointwise surrogate error (panels~g-i) stays below $5.4\times10^{-3}$.

\begin{figure*}[ht!]
    \centering
    \includegraphics[width=\textwidth]{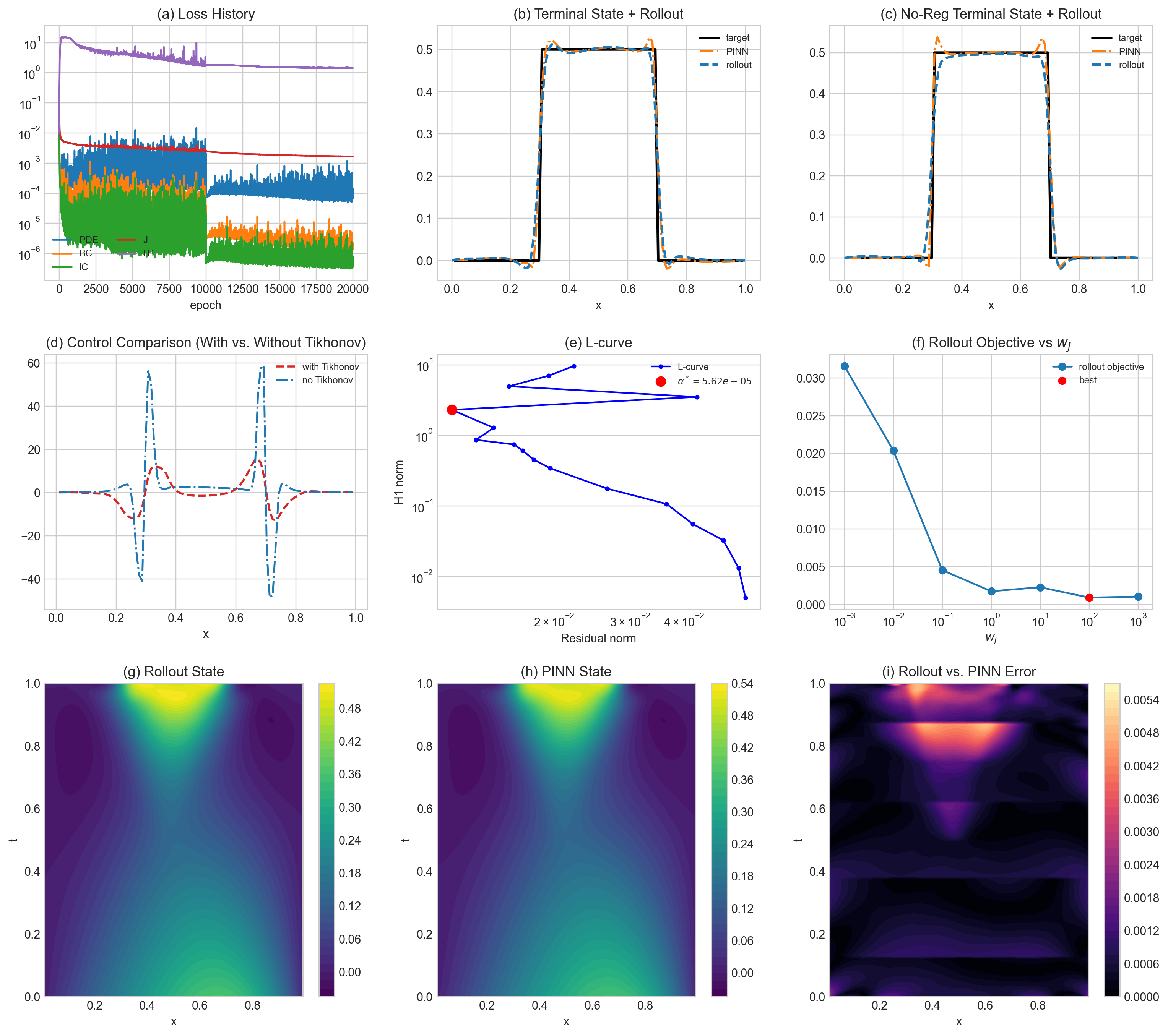}
    \caption{Optimal control results for the Burgers' equation with step-function target $u_d$.
    (a) Loss convergence over 20,000 epochs.
    (b) Terminal state comparison (regularized).
    (c) Same for unregularized ($\alpha=0$).
    (d) Control profiles with and without regularization.
    (e) L-curve; optimal $\alpha^* = 5.62\times10^{-5}$.
    (f) One-dimensional line search over $w_J$ using the rollout terminal objective.
    (g) Rollout state $u(f_{\eta^*})$.
    (h) PINNs state $u_{\theta^*}$.
    (i) Pointwise error; maximum $5.4\times10^{-3}$.}
    \label{fig:optimal_burgers_overview}
\end{figure*}

Figure~\ref{fig:burgers_control_surfaces} displays the control surfaces over the full time interval. The regularized control concentrates energy near $x=0.3$ and $x=0.7$ with moderate amplitude (approximately $-13$ to $15$), while the unregularized control reaches roughly $-50$ to $60$ near the same spatial locations. Compared with Figure~\ref{fig:heat_control_surfaces}, the Burgers control is more spatially localized, consistent with the smaller $\alpha^*$.

\begin{figure*}[ht!]
    \centering
    \includegraphics[width=0.8\textwidth]{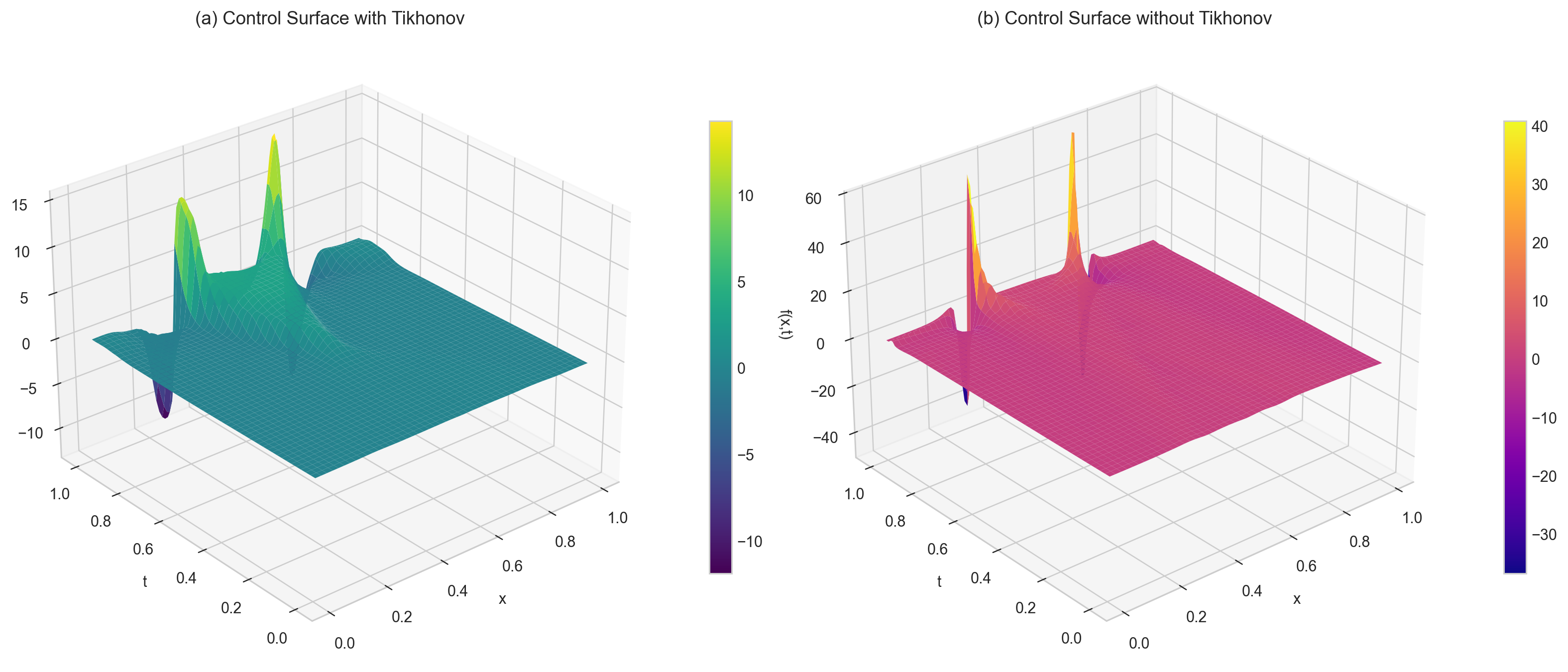}
    \caption{Control surfaces $f_{\eta^*}(x,t)$ for the Burgers' equation:
    (a) regularized ($\alpha^* = 5.62\times10^{-5}$);
    (b) unregularized ($\alpha=0$).}
    \label{fig:burgers_control_surfaces}
\end{figure*}

    \section{Conclusion}
\label{sec:conclusion}

We have proposed a Tikhonov-regularized PINNs framework for terminal-state tracking optimal control of parabolic PDEs, addressing the inherent ill-posedness from non-uniqueness of optimal controls. On the theoretical side, we established a consistency result (Proposition~\ref{prop:consistency}) and a novel error estimate (Theorem~\ref{thm:control_error}); a formal extension to the nonlinear case is given in Proposition~\ref{prop:heuristic}. Numerical experiments on the heat and Burgers' equations confirm that the regularized framework yields smooth, physically meaningful controls without sacrificing terminal tracking accuracy.

Future work includes extending the framework to boundary control and more complex systems~\cite{thi2024computational}, incorporating adaptive loss-weighting to reduce manual tuning, and developing a rigorous error analysis for the nonlinear case along the lines of~\cite{zhang2023stability}.
    \bibliographystyle{ieeetr} 
    \bibliography{ref}
\end{document}